\theoremstyle{plain}
\newtheorem{theorem}{Theorem}[section]
\newtheorem{proposition}[theorem]{Proposition}
\newtheorem{lemma}[theorem]{Lemma}
\newtheorem{definition}[theorem]{Definition}
\theoremstyle{remark}
\newtheorem{remark}[theorem]{Remark}
\newtheorem{example}[theorem]{Example}
\numberwithin{equation}{section}
\newcommand{\K}{\ensuremath{\Bbbk}}
\newcommand{\C}{\mathcal C}
\newcommand{\oa}{\overline \alpha}
\newcommand{\og}{\overline \gamma}
\newcommand{\oR}{\overline R}
\newcommand{\oL}{\overline L}
\newcommand{\ow}{\overline w}
\renewcommand{\P}{\mathbb P}
\newcommand{\rad}{\operatorname{rad}}
\newcommand{\Hom}{\operatorname{Hom}}
\newcommand{\Ker}{\operatorname{Ker}}
\newcommand{\Ext}{\operatorname{Ext}}
\renewcommand{\Im}{\operatorname{Im}}
\newcommand{\Id}{\textsl{Id}}
\renewcommand{\mod}{\operatorname{mod}}
\newcommand{\HH}{\ensuremath{\mathsf{HH}}}
\newcommand{\achico}[1]{\scalebox{0.8}{${#1}$} }
\begin{document}

\title[Morita invariance for infinitesimal deformations]{Morita invariance for infinitesimal deformations}

\author[M. J. Redondo]{Mar\'\i a Julia Redondo}

\author[L. Rom\'an]{Lucrecia Rom\'an}

\author[F. Rossi Bertone]{Fiorela Rossi Bertone}

\author[M. Verdecchia]{Melina Verdecchia}

\address{Instituto de Matem\'atica (INMABB), Departamento de Matem\'atica, Universidad Nacional del Sur (UNS)-CONICET, Bah\'\i a Blanca, Argentina}
\email{mredondo@uns.edu.ar}
\email{lroman@uns.edu.ar}
\email{fiorela.rossi@uns.edu.ar}
\email{mverdec@uns.edu.ar}

\subjclass[2010]{16S80, 16D90, 16E40.}

\keywords{Hochschild cohomology, Morita equivalence, infinitesimal deformations}

\thanks{The first author is a research member of CONICET (Argentina). The first and second author  have been  supported  by  the  project  PICT-2015-0366.}

\date{\today}

\begin{abstract}
Let $A$ and $B$ be two Morita equivalent finite dimensional associative algebras over a field  $\K$. It is well known that Hochschild cohomology is invariant under Morita equivalence.  Since infinitesimal deformations are connected with the second Hochschild cohomology group, we explicitly describe the transfer map connecting $\HH^2(A)$ with $\HH^2(B)$.  This allows us to transfer Morita equivalence between $A$ and $B$ to that between infinitesimal deformations of them.
As an application, when $\K$ is algebraically closed, we consider the quotient path algebra associated to $A$ and describe the presentation by quiver and relations of the infinitesimal deformations of $A$. 
\end{abstract}

\maketitle

\section*{Introduction}\label{sect:introduction}

The algebraic deformation theory of associative algebras was introduced by Gerstenhaber in 1960s. In a series of papers he studied and described, between many other properties, the connection between deformations of an associative algebra and its Hochschild cohomology.

It is well known that Hochschild cohomology is invariant under Morita equivalence. It is natural to ask then, the behaviour of deformations related to Morita equivalences.

In this work we restrict our attention to infinitesimal deformations.
Set $\K$ a field and consider the ring of dual numbers $\K[t]/(t^2)$.
An infinitesimal deformation of an associative $\K$-algebra $A$ is an associative structure of $\K[t]/(t^2)$-algebra on $A[t]/(t^2)$ such that, modulo the ideal generated by $t$, the multiplication corresponds to that on $A$.
Gerstenhaber showed in \cite{G1} that the infinitesimal deformations of $A$ are parametrized by the second Hochschild cohomology group $\HH^2(A)$ of $A$ with coefficients in itself.

Let $A$ and $B$ be two Morita equivalent $\K$-algebras. The goal of the present paper is to describe how one can transfer the Morita equivalence between the algebras to that between the correspondent infinitesimal deformations of them.
For this, we give an explicit transfer map which assigns to each Hochschild $2$-cocycle of $A$, a Hochschild $2$-cocycle of $B$.
More generally, we define the transfer map as a cochain complex map and show that it induces isomorphisms $\HH^n(A)\simeq \HH^n(B)$ in each degree $n\geq 0$.   The definition and properties of this transfer map hold also in the more general context of stable equivalence of Morita type.  Similar transfer maps have been considered in \cite{B,Keller,loday, Z} for homology, and in \cite{Li, KLZ} for cohomology of symmetric algebras. 

The description of the finite dimensional modules over an infinitesimal deformation of $A$ that we develop in order to prove the Morita equivalence between deformations, would be the starting point for the study of the module category as well as its relation with the category of modules over the original algebra $A$. \\

The paper is divided into five sections. In the first one we introduce the needed concepts and notation. The second section is devoted  to describe explicitly the transfer map and show that it induces isomorphisms in cohomology.  Most important for our purpose is the second Hochschild cohomology group, which is intimately related with deformations. The goal of Section 3 is
to get a nice description of the category of modules over an infinitesimal deformation of an algebra.  In Section 4 we prove our main theorem concerning Morita invariance of deformations of algebras by constructing the bimodules that realize this equivalence. Last section applies the previous results in order to get a description by quiver and relations of any infinitesimal deformation of a finite dimensional algebra over an algebraically closed field $\K$.

\section{Preliminaries}\label{sect:preliminaries}

\subsection{Morita equivalent algebras}\label{subsec:Morita}

Let $A$ and $B$ be Morita equivalent algebras. That is, there exist bimodules $_AP_B, _BQ_A$,  and isomorphisms of bimodules
\begin{align}\label{eq: isos A y B}
	\langle - ,- \rangle_A : P \otimes_B Q \to A \quad \mbox{ and } \quad \langle -,-\rangle_B: Q \otimes_A P \to B.
\end{align}
Moreover, from \cite[eq.~(1.2.7.1), page 18]{loday}, we can assume that these isomorphisms satisfy, for all $p,p'\in P$ and $q,q'\in Q$, the following equations
\begin{align}
	\langle p, q\rangle_A p' & = p \langle q, p'\rangle_B,  \label{eq:cond_isos1}\\
	\langle q, p\rangle_B q' & = q \langle  p, q'\rangle_A.
	\label{eq:cond_isos2}
\end{align}
From now on, we fix the notation
\begin{align}\label{eq: 1 A y 1 B}
	1_A= \sum_{i=1}^{m'} \langle p'_i, q'_i\rangle_A \quad \mbox{ and } \quad 1_B= \sum_{k=1}^{m} \langle q_k, p_k\rangle_B.
\end{align}

\subsection{Hochschild cohomology}\label{subsect:HochCohom}

Given an algebra $A$ and $M$ an $A$-bimodule,
the Hochschild complex is the complex 
\begin{align*}
	0\to M\stackrel{d^1}{\longrightarrow} \Hom_\K (A, M) \stackrel{d^2}{\longrightarrow} \cdots  \longrightarrow \Hom_\K (A^{\otimes n}, M)\stackrel{d^{n+1}}{ \longrightarrow}  \Hom_\K (A^{\otimes (n+1)}, M) \longrightarrow \cdots
\end{align*}
where, for each $n>0$, $A^{\otimes n}$ denotes the $n$-fold tensor product of $A$ with itself over $\K$. The map $d^1:M\to\Hom_\K (A, M)$ is defined by $d^1(m)(a)= am-ma$, for $m\in M$, $a\in A$, and $d^{n+1}= \sum_{j=0}^{n+1}(-1)^{j} d^{n+1}_j$ is defined by
\begin{align*}\notag
	d_0^{n+1}(f)(a_0\otimes\cdots\otimes a_n) & = a_0f(a_1\otimes\cdots\otimes a_n), \\ 
	d_j^{n+1}(f)(a_0\otimes\cdots\otimes a_n)  & = f(a_0\otimes\cdots\otimes a_{j-1}a_j\otimes\cdots\otimes a_n), \quad 1 \leq j \leq n,\\ 
	d_{n+1}^{n+1}(f)(a_0\otimes\cdots\otimes a_n) & =f(a_0\otimes\cdots\otimes a_{n-1})a_n,
\end{align*}
for a $\K$-linear map $f : A^{\otimes n}\to M$ and $a_0,\ldots,a_n\in A$.
The $n$-th cohomology group of this complex is called the \emph{$n$-th Hochschild cohomology group of $A$} with coefficients in $M$, and it is denoted by $\HH^n(A,M)$.
We recall that, since $\K$ is a field, the Hochschild cohomology groups $\HH^n(A,M)$ can be identified with the groups $\Ext^n_{A-A}(A, M)$.  For any $f \in \Ker d^{n+1}$, we denote $[f] \in \HH^n(A,M)$.

If  $_AM_A={}_AA_A$,
then we write $\HH^n(A)$. It is well known that the Hochschild cohomology groups of two Morita equivalent algebras $A$ and $B$ are isomorphic, see for instance \cite[\S 1.5]{loday}.

When $A=\K Q/I$ is given by quiver and relations, see Subsection \ref{subsect: quiver}, a substantial reduction in the size of the complex used to compute $\HH^n(A,M)$ can be obtained by replacing $\Hom_\K (A^{\otimes n}, M)$ by 
$\Hom_{E-E} (\rad A^{\otimes_E n}, M)$, where $E = \K Q_0$ is the semisimple subalgebra of $A$ generated by the set of vertices $Q_0$ such that 
$A=E\oplus\rad A$, see \cite[Proposition 2.2]{Cibils}. 

For further definitions and facts, we refer the reader to \cite{W}.

\subsection{Deformations of associative algebras }\label{subsect:DeforAlg}

Let $A$ be a $\K$-algebra. We consider the truncated polynomial ring $\K[t]/(t^2)$. An \emph{infinitesimal deformation} of an associative $\K$-algebra $A$ is an associative structure of $\K[t]/(t^2)$-algebra on $A[t]/(t^2)$ such that modulo the ideal generated by $t$, the multiplication corresponds to that on $A$. More precisely:
\begin{definition}\label{def:inf def}
	Let  $f \in \Hom_\K(A \otimes_\K A, A)$. Let $A_f:= A[t]/(t^2)\simeq A\oplus A$ be the algebra with multiplication  given by
	\begin{align*}
		(a_0, b_0)(a_1, b_1) = (a_0 a_1, a_0 b_1 + b_0 a_1+ f(a_0 \otimes a_1)), \qquad \forall (a_0, b_0), (a_1, b_1) \in A[t]/(t^2).
	\end{align*}
	If this multiplication is associative we say that $A_f$ is an \emph{infinitesimal deformation of $A$}.
\end{definition}
We say that two infinitesimal deformations $A_f$ and $A_{f'}$ are \emph{equivalent} if there exists a $\K[t]/(t^2)$- linear map $\phi: A_f\to A_{f'}$ such that  $\phi (a,0)=(a,0)$ and 
\begin{align*}
	\phi\circ \operatorname{mult}_{A_f} =\operatorname{mult}_{A_{f'}}\circ (\phi\otimes \phi).
\end{align*} 
If we write out the associativity condition in $A_f$ we have
$$a_0f(a_1\otimes a_2)+f(a_0\otimes a_1a_2) = f(a_0a_1\otimes a_2) + f(a_0\otimes a_1)a_2$$
for all $a_0,a_1,a_2\in A$, that is, $f $ is a Hochschild $2$-cocycle. Moreover, the next correspondence is well known and was proved in \cite{G1}.
\begin{theorem}
	There is a one-to-one correspondence between the space of equivalence classes of infinitesimal deformations of $A$ and the second Hochschild cohomology group $\HH^2(A)$.
\end{theorem}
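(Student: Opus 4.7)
The plan is to combine the cocycle observation already made in the excerpt with a direct analysis of when two deformations are equivalent, and then invoke the standard quotient $\HH^2(A)=Z^2/B^2$. The associativity of $A_f$ has already been translated into the $2$-cocycle condition on $f$, so what remains is to show that the assignment $A_f\mapsto [f]$ descends to a well-defined bijection on equivalence classes.

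First, I would pin down the shape of an arbitrary equivalence $\phi:A_f\to A_{f'}$. The conditions $\phi(a,0)=(a,0)$ and $\K[t]/(t^2)$-linearity force $\phi$ to have the form
\[
\phi(a,b) = (a,\, b + g(a))
\]
for a uniquely determined $\K$-linear map $g:A\to A$. Indeed, the element $t\in \K[t]/(t^2)$ acts on $A_f$ by $t\cdot(a,b)=(0,a)$, so $\phi(0,b)=\phi(t\cdot(b,0))=t\cdot(b,g(b))=(0,b)$, and hence $\phi(a,b)=\phi(a,0)+\phi(0,b)=(a,b+g(a))$.

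Next, I would translate the multiplicativity condition $\phi\circ\operatorname{mult}_{A_f}=\operatorname{mult}_{A_{f'}}\circ(\phi\otimes\phi)$ into a relation between $f$, $f'$ and $g$. Evaluating both sides on $(a_0,0)\otimes(a_1,0)$ and comparing the second coordinates yields
\[
g(a_0 a_1)+f(a_0\otimes a_1) = a_0\, g(a_1)+g(a_0)\,a_1+f'(a_0\otimes a_1),
\]
which is exactly $f-f' = d^2(g)$ for the Hochschild differential $d^2:\Hom_\K(A,A)\to\Hom_\K(A\otimes A,A)$. Since this chain of equalities is reversible, $A_f$ and $A_{f'}$ are equivalent if and only if $f-f'$ is a Hochschild $2$-coboundary. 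Consequently $[A_f]\mapsto[f]$ is a well-defined injection from equivalence classes to $\HH^2(A)$, and surjectivity is immediate because any $2$-cocycle $f$ produces an associative $A_f$ by the computation already exhibited in the excerpt.

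The argument is essentially bookkeeping rather than an obstacle-laden calculation; the only step that needs genuine care is the normalization of $\phi$ in the first paragraph, where both the $\K[t]/(t^2)$-linearity and the condition $\phi(a,0)=(a,0)$ must be used jointly to conclude that $\phi$ takes the stated form $(a,b)\mapsto(a,b+g(a))$. Once this is in hand, the rest is a direct comparison of coefficients.
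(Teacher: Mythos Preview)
The paper does not supply its own proof of this theorem; it simply records the result and attributes it to Gerstenhaber~\cite{G1}. Your argument is the classical one and is correct in substance: associativity of $A_f$ is the $2$-cocycle condition on $f$, and equivalence of $A_f$ and $A_{f'}$ amounts to $f-f'=d^2(g)$ for some $g\in\Hom_\K(A,A)$, whence the bijection with $\HH^2(A)=Z^2/B^2$.

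One point deserves cleaning up. The paper's definition of equivalence literally requires $\phi(a,0)=(a,0)$, and combined with $\K[t]/(t^2)$-linearity this forces $\phi(0,b)=\phi(t\cdot(b,0))=t\cdot(b,0)=(0,b)$, hence $\phi=\Id$ and $g=0$. In your derivation you in fact use $\phi(a,0)=(a,g(a))$ (this is visible in the step ``$t\cdot(b,g(b))$''), which is the standard condition that $\phi$ reduces to the identity modulo $t$. That is certainly the intended notion of equivalence, but you should state it explicitly rather than quoting the paper's $\phi(a,0)=(a,0)$ and then silently replacing it by $\phi(a,0)=(a,g(a))$ in the next line. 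With that correction your normalization paragraph reads: $\phi$ is $\K[t]/(t^2)$-linear and congruent to $\Id$ modulo $t$, so $\phi(a,0)=(a,g(a))$ for some $\K$-linear $g$, and then $\phi(0,b)=t\cdot\phi(b,0)=(0,b)$, giving $\phi(a,b)=(a,b+g(a))$ as claimed.
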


\subsection{Presentation by quiver and relations}\label{subsect: quiver}

We recall some basic concepts about quivers and algebras; for unexplained notions and further results we refer, for instance, to \cite{ARS, ASS}.

A finite \emph{quiver} $Q=(Q_0,Q_1)$ is a finite oriented graph with set of vertices $Q_0$ and set of arrows $Q_1$. We denote by $s,t:Q_1\rightarrow Q_0$ the \emph{source} and \emph{target} maps respectively. 
A \emph{path} $w$ in $Q$ of length $n$ is a concatenation of arrows $w=\alpha_1\cdots\alpha_n$ with $t(\alpha_i) = s(\alpha_{i+1})$ for $1\leq i < n$; for any vertex $i\in Q_0$,  $e_i$ is the trivial path of length zero.  We put $s(w)=s(\alpha_1)$ and $t(w)=t(\alpha_n)$, and $s(e_i)=i=t(e_i)$.
We say that an arrow $\alpha$ divides a path $w$ if $w= L(w) \alpha R(w)$, where $L(w)$ and $R(w)$ 
are paths in $Q$.

The \emph{path algebra} $\K Q$ is the $\K$-algebra whose basis is the set of all paths in $Q$, including one trivial path $e_i$ at each vertex $i\in Q_0$, endowed with the multiplication induced from the concatenation of paths.
The sum of the trivial paths is the identity.

It is well known that in case $\K$ is algebraically closed, any finite-dimensional $\K$-algebra $A$ is Morita equivalent to a quotient of a path algebra $\K Q/I$, where $Q$ is a quiver and $I$ an admissible ideal of $\K Q$. In this case,
the pair $(Q, I)$ is called a \emph{presentation of $A$ by quivers and relations}.
A \emph{relation} $\rho$ on the ideal $I$ is a $\K$-linear combination of paths $\rho=  \sum_{i} \ \lambda_i \gamma_i$ with $\lambda_i$ nonzero scalars and $\gamma_i$ paths of length at least two having all the same source and the same target.

\section{Transfer map}\label{sec: transfer map} 
From now on, let $A$ and $B$ be Morita equivalent algebras. It is well known that the Hochschild cohomology groups of $A$ and $B$ are isomorphic, see for instance \cite[\S 1.5]{loday}. The goal of this section is to give an explicit transfer map $\HH^n(A)\to\HH^n(B)$. This map when $n=2$ will be crucial in Section \ref{sec:morita_eq}. \\

Recall the notation from \eqref{eq: 1 A y 1 B}, $1_B= \sum_{i=1}^m \langle q_i, p_i\rangle_B$. 
Define $\phi^n:\Hom_\K (A^{ \otimes n}, A)\to \Hom_\K (B^{ \otimes n}, B)$ by
\begin{align}\label{eq:def_t_n}
	\phi^n(f)(b_1 \otimes \cdots \otimes b_n)= \sum_{1 \leq i_0, \cdots, i_n \leq m} \langle q_{i_0}, f(\langle p_{i_0},b_1q_{i_1}\rangle_A \otimes \cdots \otimes \langle p_{i_{n-1}},b_nq_{i_n}\rangle_A)p_{i_n}\rangle_B,
\end{align}
for  $f\in \Hom_\K (A^{\otimes n},  A)$, $b_1,\cdots, b_n\in B$.

\begin{proposition}\label{prop:complex}
	Let $A$ and $B$ be Morita equivalent algebras. Then the maps $\phi^n$
	induce maps $$\widetilde {\phi^n}:\HH^n(A)\to \HH^n(B).$$
\end{proposition}
\begin{proof}
	It is evident that $(\phi^n): (\Hom_\K (A^{\otimes n},A), d^{n+1}) \to (\Hom_\K (B^{\otimes n},B), d^{n+1}) $ is a cochain map since the equations
	\begin{align*}
		&\sum_{i_0=1}^m q_{i_0}  \langle p_{i_0}, b_0 q_{i_1} \rangle_A  = b_0 q_{i_1}, \\
		&\sum_{i_{j-1}=1}^m  \langle p_{i_{j-2}}, b_{j-1} q_{i_{j-1}} \rangle_A \langle p_{i_{j-1}}, b_{j} q_{i_j} \rangle_A = \langle p_{i_{j-2}}, b_{j-1} b_{j} q_{i_j} \rangle_A, \\
		&\sum_{i_{n+1}=1}^m   \langle p_{i_n}, b_n q_{i_{n+1}} \rangle_A p_{i_{n+1}} = p_{i_n} b_n, 
	\end{align*} 
	imply that $d_j^{n+1} \phi^n = \phi^{n+1} d_j^{n+1}$ for all $j$ such that $0 \leq j \leq n+1$. Hence each $\phi^n$ induces a well-defined map $\widetilde \phi^n:\HH^n(A)\to \HH^n(B)$.
	\end{proof}

It is clear that $\phi^n$ is defined in terms of $1_B= \sum_{i=1}^m \langle q_i, p_i\rangle_B$. Assume that $1_B= \sum_{i=1}^{\hat m} \langle \hat q_i, \hat p_i\rangle_B$ and let $\hat \phi^n$ be the transfer map defined in terms of $1_B= \sum_{i=1}^{\hat m} \langle \hat q_i, \hat p_i\rangle_B$.

\begin{proposition}\label{prop:homotopia}
	Let $A$ and $B$ be Morita equivalent algebras. Then the maps $\phi^n$ and $\hat \phi^n$ are homotopic.
\end{proposition}

\begin{proof}
Let  $h^{n+1}: \Hom_\K (A^{\otimes n+1},A) \to \Hom_\K (B^{\otimes n}, B)$ be given by 
	\begin{align}\label{eq:def_h}
		h^{n+1}= \sum_{r=1}^{n+1} (-1)^r h^{n+1}_r
	\end{align}
	with, for $2 \leq r \leq n$,
		 	\begin{align*}	
		h^{n+1}_1 (f)  (b_1 \otimes \cdots \otimes b_n ) =   \sum_{1 \leq i_0, \cdots, i_{n+1} \leq m} &
		\langle q_{i_0} , f(\langle  p_{i_0}, \hat q_{i_1}\rangle_A \otimes \langle \hat p_{i_1},b_1q_{i_2}\rangle_A \\
		& \otimes \langle p_{i_2},b_2q_{i_3}\rangle_A \otimes \cdots 
		 \otimes \langle p_{i_{n}},b_nq_{i_{n+1}}\rangle_A )  p_{i_{n+1}} \rangle_B,\\
			h^{n+1}_r(f) (b_1 \otimes \cdots \otimes b_n) 
		= \sum_{1 \leq i_0, \cdots, i_{n+1} \leq m}   &\langle \hat q_{i_0} , f(\langle \hat p_{i_0},b_1\hat q_{i_1}\rangle_A \otimes \cdots  
		 \otimes \langle \hat p_{i_{r-3}},b_{r-2} \hat q_{i_{r-2}}\rangle_A \\	 
		 & \otimes  \langle \hat  p_{i_{r-2}},b_{r-1} q_{i_{r-1}} \rangle_A  
		\otimes  \langle   p_{i_{r-1}},  \hat q_{i_{r}}\rangle_A \otimes  \langle \hat  p_{i_{r}},b_{r} q_{i_{r+1}} \rangle_A \\	 
		&
		 \otimes  \langle  p_{i_{r+1}},b_{r+1} q_{i_{r+2}} \rangle_A  \otimes \cdots \otimes \langle p_{i_{n}},b_nq_{i_{n+1}}\rangle_A )  p_{i_{n+1}}  \rangle_B, \\ \notag
		h^{n+1}_{n+1} (f)  (b_1 \otimes \cdots \otimes b_n )  = \sum_{1 \leq i_0, \cdots, i_{n+1} \leq m} &\langle \hat q_{i_0},  f(\langle \hat p_{i_0},b_1\hat q_{i_1}\rangle_A \otimes \cdots \otimes \langle \hat p_{i_{n-1}},b_nq_{i_n}\rangle_A \\
		& \otimes \langle   p_{i_{n}}, \hat q_{i_{n+1}}\rangle_A ) \hat p_{i_{n+1}} \rangle_B.
		\end{align*}

		Tedious but direct computations show that 
\begin{align*}
  h_1^{n+1}d_1^{n+1} &= \phi^n;  &                      && && h_r^{n+1}d_{n+1}^{n+1} &= d^n_n h_r^n,     && \achico{1 \leq r\leq n}; \\
  h_{n+1}^{n+1}d_n^{n+1} &=  \hat \phi^n;  &       && && h_r^{n+1}d_i^{n+1}     &= d^n_i h^n_{r-1}, && \achico{3 \leq r\leq n+1, \ 1\leq i\leq r-2}; \\
  h_r^{n+1} d_0^{n+1} &= d^n_0 h^n_{r-1}, && \achico{2 \leq r\leq n+1}; &&& h_r^{n+1}d_j^{n+1} &= d^n_{j-1}h_r^n, && \achico{1 \leq r\leq n-1, \ r+1\leq j\leq n};\\
  h_r^{n+1}d_{r-1}^{n+1} &= h_{r+1}^{n+1}d_{r+1}^{n+1}, && \achico{1 \leq r\leq n}. 
\end{align*}
Then one can check that
\begin{align*}
		h^{n+1} d^{n+1} & = \sum_{r=1}^{n+1} \sum_{i=0}^{n+1} (-1)^{r+i} h_r^{n+1} d_i^{n+1} 
		= h_1^{n+1} d_1^{n+1} - h_{n+1}^{n+1} d_n^{n+1} \\
		& \quad +  \sum_{r=2}^{n+1}  (-1)^r h_r^{n+1} d_0^{n+1} 
		+  \sum_{r=1}^{n} (-1)^{r+n+1} h_r^{n+1} d_{n+1}^{n+1}   
		-  \sum_{r=1}^{n} h_r^{n+1} d_{r-1}^{n+1}  \\
		& \quad +  \sum_{r=2}^{n+1}  h_r^{n+1} d_r^{n+1} +  \sum_{r=3}^{n+1} \sum_{i=1}^{r-2} (-1)^{r+i} h_r^{n+1} d_i^{n+1} 
		+  \sum_{r=1}^{n-1} \sum_{i=r+1}^{n} (-1)^{r+i} h_r^{n+1} d_i^{n+1} \\
		& =  \phi^n - \hat \phi^n  +  \sum_{r=2}^{n+1}  (-1)^r  d_0^{n} h_{r-1}^{n}
		+  \sum_{r=1}^{n} (-1)^{r+n+1}  d_{n}^{n}  h_r^{n} \\
		& \quad +   \sum_{r=3}^{n+1} \sum_{i=1}^{r-2} (-1)^{r+i} d_i^{n} h_{r-1}^{n} 
		+  \sum_{r=1}^{n-1} \sum_{i=r+1}^{n} (-1)^{r+i} d_{i-1}^{n} h_r^{n}  
\end{align*}
and this equals to
\begin{multline*}
		\phi^n -  \hat \phi^n  -  \sum_{r=1}^{n}  (-1)^r  d_0^{n} h_{r}^{n}
		-  \sum_{r=1}^{n} (-1)^{r+n}  d_{n}^{n}  h_r^{n} \\
		-   \sum_{r=2}^{n} \sum_{i=1}^{r-1} (-1)^{r+i} d_i^{n} h_{r}^{n} 
		-  \sum_{r=1}^{n-1} \sum_{i=r}^{n-1} (-1)^{r+i} d_{i}^{n} h_r^{n} 
		= \phi^n - \hat \phi^n  - d^n h^n.
\end{multline*}
	Hence, we have
	\begin{align}\label{eq: h homotopy}
		h^{n+1} d^{n+1}= \phi^n - \hat \phi^n - d^n h^n
	\end{align}
	and the proof of the proposition is  complete.
\end{proof}

Analogously,  for $1_A= \sum_{j=1}^{m'} \langle p'_j, q'_j\rangle_A$, we have $\psi^n:\Hom_\K (B^{\otimes n}, B)\to \Hom_\K (A^{\otimes n}, A)$ given by
\begin{align}\label{eq:def_s_n}
	\psi^n(g)(a_1 \otimes \cdots \otimes a_n)= \sum_{1 \leq j_0, \cdots ,j_n \leq m'} \langle p'_{j_0}, g(\langle q'_{j_0},a_1p'_{j_1}\rangle_B \otimes \cdots \otimes \langle q'_{j_{n-1}},a_np'_{j_n}\rangle_B)q'_{j_n}\rangle_A,
\end{align}
for $g\in \Hom_\K (B^{\otimes n}, B)$ and $a_1,\cdots, a_n \in A$.

\begin{theorem}\label{thm:transfer}
	Let $A$ and $B$ be Morita equivalent algebras. Then the induced maps $$\widetilde {\phi^n}:\HH^n(A)\to \HH^n(B) \, \mbox{ and } \, \widetilde {\psi^n}:\HH^n(B)\to \HH^n(A)$$ are isomorphisms and inverse to each other.
\end{theorem}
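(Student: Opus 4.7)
My plan to prove Theorem \ref{thm:transfer} has two steps: verify that $\phi^\bullet$ and $\psi^\bullet$ are cochain maps, so they descend to cohomology; then show that the induced maps are mutually inverse. For the first step I would check the identity $d^{n+1}_B \circ \phi^n = \phi^{n+1} \circ d^{n+1}_A$ (and symmetrically for $\psi$) term by term. For each interior index $1 \le j \le n$, the $j$-th summand of $\phi^{n+1}(d^{n+1}_A f)(b_0 \otimes \cdots \otimes b_n)$ involves $b_{j-1} b_j$ sitting in a single tensor slot of $f$; inserting the resolution of unity $\sum_i \langle q_i, p_i \rangle_B = 1_B$ between the factors $b_{j-1}$ and $b_j$ splits this slot into two, which matches the $j$-th interior term of $d^{n+1}_B(\phi^n f)(b_0 \otimes \cdots \otimes b_n)$. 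For the extremal terms ($j = 0, n+1$), I would use \eqref{eq:cond_isos2} to move $b_0$ past $\langle q_{i_0}, - \rangle_B$ and $b_{n+1}$ past $\langle -, p_{i_n} \rangle_B$, recovering the required left- and right-multiplications.

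For the second step, a direct expansion using bilinearity of the brackets together with \eqref{eq:cond_isos2} yields
\begin{align*}
\psi^n\phi^n(f)(a_1 \otimes \cdots \otimes a_n) = \sum \beta_0 \cdot f(\alpha_1 a_1 \beta_1 \otimes \cdots \otimes \alpha_n a_n \beta_n) \cdot \alpha_{n+1},
\end{align*}
where $\beta_k = \langle p'_{j_k}, q_{i_k}\rangle_A$, $\alpha_k = \langle p_{i_{k-1}}, q'_{j_{k-1}}\rangle_A$, and the sum runs over all $i_\bullet, j_\bullet$. The key identity is that \eqref{eq:cond_isos1} combined with $\sum_i \langle q_i, p_i\rangle_B = 1_B$ and $\sum_j \langle p'_j, q'_j\rangle_A = 1_A$ forces $\sum_{i_k, j_k} \beta_k \alpha_{k+1} = 1_A$ for each $k$. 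If $\beta_k$ and $\alpha_{k+1}$ were adjacent, the whole expression would collapse to $f(a_1 \otimes \cdots \otimes a_n)$; the obstruction is that they are separated by $a_{k+1}$ (or by $f$ itself for the outermost pair). I would correct this discrepancy modulo a coboundary by constructing, for a cocycle $f$, an explicit $(n-1)$-cochain $h^n(f)$ whose Hochschild differential compensates the telescoping; the cocycle relation $d^{n+1}(f) = 0$ is what lets one slide $\beta_k \alpha_{k+1}$ across $a_{k+1}$ with the correct alternating signs. The analogous argument handles $\phi^n\psi^n$.

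The main obstacle I expect is the combinatorial bookkeeping in producing $h^n$ with signs that make the telescoping close up to the cocycle condition. A conceptually cleaner alternative is available via bar resolutions: both $\mathrm{Bar}_\bullet(B)$ and $Q \otimes_A \mathrm{Bar}_\bullet(A) \otimes_A P$ are projective $B$-bimodule resolutions of $B$ (using $Q \otimes_A P \simeq B$ via $\langle -, -\rangle_B$), and the formulas \eqref{eq:def_t_n} and \eqref{eq:def_s_n} can be recognised as the cochain maps obtained by applying $\Hom_{B^e}(-, B)$ and $\Hom_{A^e}(-, A)$, respectively, to explicit comparison maps between these resolutions in the two directions. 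Since any two chain maps lifting $\mathrm{id}_B$ between the two resolutions are chain homotopic, $\psi^n \phi^n$ and $\phi^n \psi^n$ are chain homotopic to the identity, and therefore $\widetilde{\phi^n}$ and $\widetilde{\psi^n}$ are inverse isomorphisms on Hochschild cohomology.
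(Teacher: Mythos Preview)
Your proposal is correct and, in its primary outline, follows the same route as the paper. The paper verifies the cochain identity by exactly the mechanism you describe (inserting $1_B = \sum_k \langle q_k, p_k\rangle_B$ at interior slots and using \eqref{eq:cond_isos1}--\eqref{eq:cond_isos2} at the extremes), and then produces the explicit homotopy $h^{n+1} = \sum_{r=1}^{n+1} (-1)^r h^{n+1}_r$ whose existence you anticipate; the bulk of the paper's proof is precisely the ``combinatorial bookkeeping'' you flag as the main obstacle, written out in full and checked against the identities $h_r^{n+1} d_i^{n+1} = \cdots$ so that $h^{n+1} d^{n+1} + d^n h^n = \Id - \psi^n\phi^n$.

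Your alternative via comparison of projective resolutions is correct and conceptually cleaner: once one recognises \eqref{eq:def_t_n}--\eqref{eq:def_s_n} as induced by explicit lifts of $\mathrm{id}_B$ (resp.\ $\mathrm{id}_A$) between $\mathrm{Bar}_\bullet(B)$ and $Q \otimes_A \mathrm{Bar}_\bullet(A) \otimes_A P$, uniqueness up to homotopy finishes the argument without any telescoping. The trade-off is that this abstract route yields only the \emph{existence} of a homotopy, whereas the paper needs the explicit formula for $h^2$ later: in Section~\ref{sec:morita_eq} the map $h^2(f)$ appears inside the definitions of $f_P$ and $f_Q$ when constructing the bimodules $\hat P$, $\hat Q$ that realise the Morita equivalence between $A_f$ and $B_g$. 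So the paper's hands-on construction is not gratuitous---it is exactly what the downstream application requires.
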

\begin{proof}
A direct computation shows that the composition $\psi^n \circ \phi^n$ is the transfer map defined in terms of 
$$1_A = \sum_{i=1}^m \sum_{j=1}^{m'} \langle \langle p'_j \otimes q_i , p_i \otimes q'_j \rangle \rangle_A$$
where $\langle \langle - , - \rangle \rangle_A : P \otimes _B Q \otimes_A P \otimes_B Q \to A$ is given by
$$\langle \langle p \otimes q ,  p' \otimes q' \rangle \rangle_A = \langle p ,  \langle q ,  p' \rangle_B q' \rangle_A =
\langle p ,  q \rangle_A \langle p' , q' \rangle_A.$$
The last equality shows that $$1_A = \sum_{i,j=1}^{m'} \langle \langle p'_j \otimes q'_j , p'_i \otimes q'_i \rangle \rangle_A$$
and the transfer map defined in terms of this equality is the identity since 
$$\sum_{i,j}  \langle \langle p'_i \otimes q'_i , a ( p'_j \otimes q'_j )  \rangle \rangle_A =a.$$
Hence the result follows from Proposition \ref{prop:homotopia}.
\end{proof}

\begin{remark} The transfer defined above can be compared with the one obtained in \cite{KLZ} for symmetric algebras since for any $x \in P$ we have that
	\begin{align*}
		x= 1_A x = \sum_{i=1}^{m'} \langle p'_i, q'_i\rangle_A x = \sum_{i=1}^{m'} p'_i \langle q'_i, x \rangle_B = \sum_{i=1}^{m'} p'_i \varphi_i(x)
	\end{align*}
	where $\varphi_i = \langle q'_i, - \rangle_B$ belongs to $\Hom_B(P,B)$.
\end{remark}

\begin{remark}\label{stable} Concerning invariance of Hochschild cohomology with respect to equivalences, it is well known that it is not invariant under stable equivalences.  However, the transfer defined above can be defined exactly in the same way in the stable equivalence of Morita type context by considering epimorphisms of bimodules
\begin{align*}
	\langle - ,- \rangle_A : P \otimes_B Q  \stackrel{\sim} {\longrightarrow} A \oplus X \to A
  \quad \mbox{ and } \quad \langle -,-\rangle_B: Q \otimes_A P  \stackrel{\sim} {\longrightarrow}  B \oplus Y \to B
\end{align*}
where $X$ is a projective $A$-bimodule and $Y$ is a projective $B$-bimodule, and assuming that these epimorphisms satisfy, for all $p,p'\in P$ and $q,q'\in Q$, the equations
$$
	\langle p, q\rangle_A p'  = p \langle q, p'\rangle_B,  \qquad \mbox{ and } \qquad 
	\langle q, p\rangle_B q'  = q \langle  p, q'\rangle_A.$$
	 Moreover, for any $A$-bimodule $M$, the transfer map $\widetilde \phi^n: \HH^n (A,M) \to \HH^n(B, Q \otimes_A M \otimes_A P)$ is induced by 
	\begin{align*}
	\phi^n(f)(b_1 \otimes \cdots \otimes b_n)= \sum_{1 \leq i_0, \cdots, i_n \leq m} q_{i_0} \otimes  f(\langle p_{i_0},b_1q_{i_1}\rangle_A \otimes \cdots \otimes \langle p_{i_{n-1}},b_nq_{i_n}\rangle_A) \otimes p_{i_n},
\end{align*}
for  $f\in \Hom_\K (A^{\otimes n}, M)$, $b_1,\cdots, b_n\in B$.
	
	The version of these transfer maps for Hochschild homology can be found in \cite{B} and in \cite[Section 5.8]{Z}.

\end{remark}

\section{$A_f$-Modules}\label{sec:Af modules}

Let $A$  and $B$ be two Morita equivalent $\K$-algebras, let $f : A \otimes_\K A \to A$ be a Hochschild $2$-cocycle  and $g=\phi^2(f)$. 
The goal of this paper is to show the Morita equivalence between the infinitesimal deformations $A_f$ and $B_{g}$. In order to prove this equivalence, in Section \ref{sec:morita_eq} we will construct $A_f$-$B_g$-bimodules. An $A_f$-$B_g$-bimodule is an uple $(M_0,M_1,T_M,f_M,g_M)$ where $M_0$ and $M_1$ are $A$-$B$-bimodules, $(M_0,M_1,T_M,f_M)$ is a left $A_f$-module, $(M_0,M_1,T_M,g_M)$ is a right $B_g$-module and these module structures satisfy certain compatible condition, see Remark \ref{bimodulos}.
We need first to describe the category of $A_f$-modules.
In this section we introduce a category $\C_f$ and we describe the category $\mod A_f$ of finite dimensional left modules over the infinitesimal deformation $A_f$ by means of an equivalence functor $F:\C_f \to \mod A_f$.

Let $\C_f$ be the category whose objects are  uples $(M_0,M_1,T_M,f_M)$ with
$M_0, M_1 \in \mod A$, $T_M \in \Hom_A(M_0, M_1)$ a monomorphism and $f_M \in \Hom_\K (A \otimes_\K M_0, M_1)$, satisfying the following condition
\begin{align} \label{eqn:1a}
	af_M(b \otimes m_0) - f_M (ab \otimes m_0) + f_M(a \otimes bm_0) - f(a \otimes b) T_M(m_0) =0
\end{align}
for  $a,b \in A, m_0 \in M_0$. The morphisms between objects $(M_0,M_1,T_M,f_M)$ and $(N_0,N_1,T_N,f_N)$ are triples of morphisms $(u_0, u_1, u_2)$ where $u_0 \in \Hom_A(M_0, N_0)$, $u_1 \in \Hom_\K (M_0, N_1)$ and $u_2 \in \Hom_A (M_1, N_1)$ are such that the diagram
\begin{align*}
	\xymatrix{
		M_0 \ar[d]_{T_M} \ar[r]^{u_0} & N_0  \ar[d]^{T_N}\\
		M_1  \ar[r]^{u_2}  & N_1 }
\end{align*}
commutes and
\begin{align}\label{eq:u_1}
	u_1(a m_0) & = a u_1 (m_0) - u_2 (f_M(a \otimes m_0)) + f_N(a \otimes u_0(m_0)), \qquad \forall a\in A, \, m_0\in M_0.
\end{align}
The composition is given by
$$(v_0, v_1, v_2) (u_0, u_1, u_2)= (v_0 u_0, v_2 u_1 + v_1 u_0, v_2 u_2)$$
and the identity morphism is $(\Id, 0, \Id)$. It is easy to check that $\C_f$ is a category. \\

Define a functor $F: \C_f \to \mod A_f$ as follows: 
\begin{itemize}
	\item For $(M_0,M_1,T_M,f_M)$ in $\C_f$, let
	$F(M_0,M_1,T_M,f_M)= M_0 \oplus M_1$ as $\K$-vector space. The $A_f$-module structure is given by 
	\begin{align*}
		(a,b)(m_0, m_1) = (am_0, am_1 + b T_M(m_0) + f_M(a \otimes m_0))
	\end{align*}
	for $a,b \in A, m_0 \in M_0$ and $m_1 \in M_1$. 
	\item For $(u_0, u_1, u_2)$ a morphism in $\C_f$, let $F(u_0, u_1, u_2)=u$ with 
	\begin{align*}
		u(m_0,m_1) = (u_0(m_0) , u_1(m_0) + u_2(m_1))
	\end{align*}
	for $m_0 \in M_0$ and $m_1 \in M_1$. 
\end{itemize}

Summing up the above considerations, we get the following result that gives the formal connection between $\C_f$ and $\mod A_f$ via the functor $F$.

\begin{proposition}
	The functor $F: \C_f \to \mod A_f$ defined above is an equivalence of categories.
\end{proposition}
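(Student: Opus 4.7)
The plan is to show that $F$ is faithful, full and essentially surjective, equivalently to write down an explicit quasi-inverse. First one verifies that $F$ is well-defined: when one expands associativity of the proposed action on $M_0\oplus M_1$ for triples $(a,0),(b,0),(m_0,0)$, the new content (beyond the $A$-module structure on $M_0,M_1$ and the $A$-linearity of $T_M$) is precisely the cocycle equation \eqref{eqn:1a}. Similarly, the $A_f$-linearity of the map $u(m_0,m_1)=(u_0(m_0),u_1(m_0)+u_2(m_1))$ decomposes into $A$-linearity of $u_0$ and $u_2$, the commutation $T_Nu_0=u_2T_M$, and exactly the relation \eqref{eq:u_1}.

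Faithfulness is immediate, since evaluating $F(u_0,u_1,u_2)$ on $(m_0,0)$ recovers $u_0$ and $u_1$ and evaluating on $(0,m_1)$ recovers $u_2$. For fullness, given any $A_f$-linear $u:M_0\oplus M_1\to N_0\oplus N_1$, I would write $u(m_0,0)=(u_0(m_0),u_1(m_0))$ and $u(0,m_1)=(u_2'(m_1),u_2(m_1))$. Applying $u$ to the identity $t\cdot(0,m_1)=0$ in the source yields $T_N(u_2'(m_1))=0$, so $u_2'=0$ because $T_N$ is a monomorphism. Unpacking $A_f$-linearity of $u$ on $(m_0,0)$ then produces exactly the four conditions required to make $(u_0,u_1,u_2)$ a morphism of $\C$, and $F(u_0,u_1,u_2)=u$ by construction.

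For essential surjectivity, given $N\in\mod A_f$ I would build a preimage as follows. Set $M_1=\{n\in N:tn=0\}$ and $M_0=N/M_1$; both are naturally $A$-modules via the split subalgebra embedding $A\hookrightarrow A_f$, $a\mapsto(a,0)$. Multiplication by $t$ on $N$ has image inside $M_1$ and kernel equal to $M_1$, so it descends to an injective $A$-linear map $T_M:M_0\to M_1$. Since $\K$ is a field, the projection $N\to M_0$ admits a $\K$-linear section $s$, which provides a $\K$-linear identification $N\simeq M_0\oplus M_1$ via $s(m_0)+m_1\leftrightarrow(m_0,m_1)$. Setting $f_M(a\otimes m_0)=a\cdot s(m_0)-s(am_0)\in M_1$, transport of the $A_f$-action along this isomorphism reproduces exactly the formula defining $F$, and the cocycle identity \eqref{eqn:1a} is then the concrete content of the equation $((a,0)(b,0))\cdot s(m_0)=(a,0)\cdot\bigl((b,0)\cdot s(m_0)\bigr)$ in $N$, using the multiplication rule $(a,0)(b,0)=(ab,f(a\otimes b))$ in $A_f$.

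The main obstacle is bookkeeping rather than conceptual difficulty: one must carefully match each summand of the $A_f$-associativity (resp. $A_f$-linearity) with the correct term of \eqref{eqn:1a} (resp. \eqref{eq:u_1}), whose asymmetric shape makes the correspondence slightly delicate. A minor remark is that $f_M$ depends on the section $s$, but two sections differ by an $A$-linear map $M_0\to M_1$, which is exactly the $u_1$-component of an isomorphism in $\C$ between the resulting objects; this ambiguity is therefore harmless for essential surjectivity.
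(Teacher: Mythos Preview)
Your proof is correct and follows the same three-step outline as the paper (faithful, full, essentially surjective), with the same mechanism for fullness. The only cosmetic difference is in essential surjectivity: the paper picks a $\K$-linear complement $M_0\subset M$ of $M_1=\Ker(t\cdot)$ and then \emph{defines} the $A$-action on $M_0$ by transport through $T|_{M_0}$, whereas you take $M_0=N/M_1$ with its natural quotient $A$-action and then choose a $\K$-linear section $s$; these are the same construction viewed from opposite ends of the isomorphism $M_0^{\text{complement}}\xrightarrow{\sim} N/M_1$.

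One small slip: in your final remark, two $\K$-linear sections differ by a $\K$-linear (not $A$-linear) map $M_0\to M_1$; this is precisely what the $u_1$-component in $\C$ is allowed to be, and your conclusion that the resulting objects are isomorphic in $\C$ via $(\Id,\,s-s',\,\Id)$ is still correct once you verify \eqref{eq:u_1} for this triple.
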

\begin{proof} 
	It is easy to check that $F$ is a functor and that it is faithful. 
	To see that it is dense, let $M$ be an $A_f$-module. Define $T: M \to M$ by $T(m)=(0,1) m$.
	Set $M_1= \Ker T$. Let $M_0$ be a complement of the vector subspace $M_1$ of $M$, that is, $M=M_0 \oplus M_1$ as $\K$-vector space. 	
	Then $M_1$ is an $A$-module with the action induced by the $A_f$-module structure of $M$, that is,
	\begin{align*}
		a\cdot m=(a,0) m, \qquad \forall a\in A, \, m\in M_1.
	\end{align*}
	In fact, if $a,b \in A, m \in M_1$ then
	$$a \cdot (b \cdot m) = (a,0)(b,0)m = (ab, f(a\otimes b))m = (ab,0) m + (f(a \otimes b),0)(0,1)m= (ab)\cdot m,$$
	since $(0,1)m = T(m)=0$.
	Crearly $T(M_0) \subset M_1$ because $T^2 =0$. Set $T_M: M_0 \to M_1$ the injective map induced by $T$.
	
	Now we shall give an action of $A$ on $M_0$. Notice that $T|_{M_0}: M_0\to \Im T$ is an isomorphism.
	Hence, we have $T':=(T|_{M_0})^{-1}: \Im T \to M_0$, in particular $T(T'(m))=m$ for all $m\in \Im T$.
	Define
	\begin{align}\label{eq:action A in M0}
		a*m=T'(a\cdot T(m)), \qquad \forall a\in A, \,m\in M_0.
	\end{align}
	This gives an action of $A$ on $M_0$. Indeed,
	\begin{align*}
		a*(b*m) &=a*(T'(b\cdot T(m)))=T'(a \cdot T(T'(b\cdot T(m))))\\
		&= T'(a\cdot (b\cdot T(m)))= T'(ab\cdot  T(m))=ab*m.
	\end{align*}
	Thus  $T_M:M_0\to M_1$ is a monomorphism of $A$-modules.
	
	The $\K$-linear map $f_M: A \otimes_\K M_0 \to M_1$ is defined as 
	\begin{align}\label{eq:f_M}
		f_M(a\otimes m):= (a,0)  m - a*m, 
	\end{align}
	for $a\in A$ and $m\in M_0$.
	Since 
	\begin{align*}
	T(f_M(a\otimes m))  = T((a,0) m) - T(a*m) 
	 = (0,1) (a,0) m - (a,0)(0,1) m =0,
	\end{align*} 
	we have that $f_M(a\otimes m)\in M_1$.
	Moreover, 
	\begin{align*}
		a  \cdot f_M(b\otimes m) &- f_M (ab\otimes m)+ f_M(a\otimes b*m)- f(a\otimes b) \cdot T_M(m) \\
		= &(a,0) ((b,0) m - b*m) - ((ab,0) m- ab*m) \\
		&+ ((a,0) (b*m)- a*(b*m))-f(a\otimes b)\cdot T(m)\\
		=& (ab,f(a\otimes b))m - (a,0)(b*m)- (ab,0) m +  ab*m \\
		& + (a,0)(b*m)-ab*m-f(a\otimes b)\cdot T(m)\\
		= &(f(a\otimes b),0)(0,1) m -f(a\otimes b)\cdot  T(m) =0.
	\end{align*}
	Finally, we show that the functor $F$ is full. For this, let 
	$$u: F(M_0,M_1,T_M,f_M) \to F(N_0,N_1,T_N,f_N)$$
	be a morphism in $\mod A_f$. For $m_0 \in M_0$ and $m_1 \in M_1$ we have
	$u(m_0, 0)=(n_0, n_1)$ and  $u(0,m_1)  = (n'_0, n'_1)$.
	In fact $n'_0=0$ since $0=  (0,1) u(0,m_1) = (0, T_N(n'_0))$ and $T_N$ is a monomorphism.
	Define
	\begin{align}
		u_0(m_0):=n_0, \qquad u_1(m_0):=n_1, \qquad u_2(m_1):=n_1'.
	\end{align}	
	Now, since
	\begin{align*}
		u(am_0, 0 ) & = u ((a,0)  (m_0, 0) )- u (0, f_M(a \otimes m_0) )\\
		& = (a,0) u (m_0,0) - u (0, f_M(a \otimes m_0) )\\
		& = (a n_0, an_1 + f_N (a \otimes n_0)) - u (0, f_M(a \otimes m_0)),
	\end{align*} 
	we have that $u_0(am_0)= an_0 = a u_0(m_0)$ and 
	\begin{align*}
		u_1(a m_0)& = a n_1 + f_N ( a \otimes n_0) - u_2 (f_M (a \otimes m_0)) \\
		& = a u_1 (m_0) + f_N ( a \otimes u_0(m_0)) - u_2 (f_M (a \otimes m_0)).
	\end{align*} 
	Hence $u_0$ is an $A$-morphism and $u_1$ satisfies \eqref{eq:u_1}. Moreover
	$$u(0, am_1)  = (a,0) (0, n'_1) = (0, an'_1)$$
	and then $u_2$ is an $A$-morphism. A direct computation shows that $u=F(u_0, u_1, u_2)$ and the proof is complete. 
\end{proof}

Likewise, one can describe right $A_f$-modules as uples $(M_0,M_1,T_M,f_M)$.

\begin{example}
	For the $A_f$-module $A_f$ we have that $F(A,A,\Id, f)=A_f$. In fact, 
	since $T: A_f \to A_f$ is given by $T(a,b)=(0,1)(a,b)=(0,a)$  for all $a,b\in A$, then $M_1=\{(0,b):b\in A\}$ and the complement $M_0=\{(a,0):a\in A\}$. The morphism $T'(0,b)=(b,0)$ for $b\in A$, hence $M_0$ is an $A$-module via $a*(a',0)=T'((a,0)T(a',0))=T'(0,aa')=(aa',0)$,  for all $a,a'\in A$.
	Finally,  $f_{A_f}=f:A\otimes_\K A\to A$ since, for all $a, b \in A$,
	$$(a,0)(b,0)-a*(b,0)=(ab,f(a\otimes b))-(ab,0)=(0,f(a\otimes b)).$$
\end{example}

\begin{remark} \label{bimodulos}
	In subsequent sections we will need the notion of bimodules. Let $A$ and $B$ be algebras and let $f:A\otimes_\K A\to A$, $g:B\otimes_\K B\to B$ be Hochschild $2$-cocycles. Consider the infinitesimal deformations $A_f$ and $B_g$. 
	By abuse of notation, we omit the corresponding equivalent functor. So, an $A_f$-$B_g$-bimodule is an uple $(M_0,M_1,T_M,f_M,g_M)$ where $M_0$ and $M_1$ are $A$-$B$-bimodules, $(M_0,M_1,T_M,f_M)$ is a left $A_f$-module, $(M_0,M_1,T_M,g_M)$ is a right $B_g$-module and 
	the maps $f_M \in \Hom_\K (A \otimes_\K M_0, M_1)$ and $g_M \in \Hom_\K ( M_0\otimes_\K B, M_1)$ satisfy the compatible condition
	\begin{align} \label{eq:bimod_comp}
		g_M(am_0  \otimes b)+f_M(a  \otimes m_0)b=	ag_M(m_0  \otimes b)  + f_M(a  \otimes m_0 b)
	\end{align}
	for all $a\in A$, $b\in B$ and $ m_0\in M_0$.
\end{remark}

\section{Morita equivalence}\label{sec:morita_eq}
Let $A$ and $B$ be Morita equivalent algebras.  Fix $f$ a representative element of $\HH^2(A)$ and $g:= \phi^2(f)$ the associated representative element of $\HH^2(B)$ given by the transfer map \eqref{eq:def_t_n}. \\

Recall from Subsection \ref{subsec:Morita} that the Morita equivalence between $A$ and $B$ is given by two bimodules $_AP_B, _BQ_A$ and the isomorphisms of bimodules
$$\langle - ,- \rangle_A : P \otimes_B Q \to A \quad \mbox{ and }\quad  \langle -,-\rangle_B: Q \otimes_A P \to B$$
satisfying \eqref{eq:cond_isos1} and \eqref{eq:cond_isos2}.

This section is devoted to prove that the infinitesimal deformations $A_f$ and $B_g$ are Morita equivalent: we need to construct two bimodules $_{A_f}{\hat P}_{B_g}$, $_{B_g}{\hat Q}_{A_f}$  and the corresponding isomorphisms of bimodules
$$\langle - ,- \rangle_{A_f}: \hat P \otimes_{B_g} \hat Q \to A_f \quad \mbox{ and } \quad \langle -,-\rangle_{B_g}: \hat Q \otimes_{A_f} \hat P \to B_g.$$

\begin{remark} \label{key}
	The key point for describing the needed structure of bimodules is the following fact.
	By \eqref{eq: 1 A y 1 B}, we know that $\{p'_i\}_{1 \leq i \leq m'}$ is a set of generators of the $B$-module  $P$ and $\{p_k\}_{1 \leq k \leq m}$ is a set of generators of the $A$-module $P$.
	More precisely, for $x \in P$,
	\begin{align*}
		x &=  x 1_B = x \sum_{k=1}^{m} \langle q_k, p_k\rangle_B = \sum_{k=1}^{m}  \langle x, q_k \rangle_A p_k \\
		& = 1_A x = \sum_{i=1}^{m'} \langle p'_i, q'_i\rangle_A x = \sum_{i=1}^{m'}p'_i \langle q'_i, x \rangle_B.
	\end{align*}
\end{remark}

We define
$ \hat P=(_AP_B, _AP_B, \Id_P, f_P: A \otimes_\K P \to P, g_P: P \otimes_\K B \to P)$
with
\begin{align*}
	f_P(a \otimes p) & = \frac{1}{2}\sum_{i=1}^m f(a \otimes \langle p, q_i \rangle_A) p_i + \frac{1}{2}\sum_{1 \leq i_0, i_1 \leq m'} p'_{i_0} g( \langle q'_{i_0}, a p'_{i_1} \rangle_B \otimes \langle q'_{i_1} , p \rangle_B) + \frac{1}{2}  h^2 (f)(a)p,\\
	g_P(p \otimes b) & = \frac{1}{2}\sum_{1 \leq i_0, i_1 \leq m} f( \langle p, q_{i_0} \rangle_A \otimes \langle p_{i_0} b, q_{i_1} \rangle_A) p_{i_1} + \frac{1}{2}\sum_{i=1}^{m'} p'_i g(\langle q'_i, p \rangle_B \otimes b),
\end{align*}
where $h^2$ is the map defined by equation \eqref{eq:def_h} for the identity and $\psi\circ \phi$,  that is:
\begin{align*}	
	h^{2} (f)  (a ) = &   \sum_{\substack{1 \leq j_0,j_1 \leq m' \\  1 \leq i_0,i_1 \leq m }}  
	\langle p'_{j_0} , q_{i_0} \rangle_A f(\langle  p_{i_0},  q'_{j_0}\rangle_A a \otimes \langle  p'_{j_1},q_{i_1}\rangle_A )  \langle p_{i_1},q'_{j_1} \rangle_A\\
	& -  \sum_{\substack{1 \leq j \leq m' \\  1 \leq i \leq m }}  
	 f(\langle  p'_j,  q_i\rangle_A  \otimes \langle  p_i,q'_j \rangle_A a).
\end{align*}

Analogously, let
$ \hat Q=(_BQ_A, _BQ_A, \Id_Q, g_Q: B \otimes_\K Q \to Q, f_Q: Q \otimes_\K  A \to Q)$
with
\begin{align*}
	f_Q(q \otimes a) & = \frac{1}{2}\sum_{i=1}^m q_i f( \langle p_i, q \rangle_A \otimes  a)
	+ \frac{1}{2}\sum_{1 \leq i_0, i_1 \leq m'} g( \langle q,  p'_{i_0} \rangle_B \otimes  \langle q'_{i_0} , ap'_{i_1} \rangle_B) q'_{i_1}
	+ \frac{1}{2}q\, h^2(f)(a) ,\\
	g_Q(b \otimes q) & =  \frac{1}{2}  \sum_{1 \leq i_0, i_1 \leq m} q_{i_0}  f( \langle p_{i_0}  b q_{i_1} \rangle_A \otimes \langle p_{i_1} , q \rangle_A)
	+\frac{1}{2} \sum_{i=1}^{m'} g(b \otimes \langle q, p'_i \rangle_B)q'_i.
\end{align*}
\begin{lemma}\label{lem:hatP and hatQ} With the above notation,
	$\hat P$ is an $A_f$-$B_g$-bimodule and $\hat Q$ is an $B_g$-$A_f$-bimodule.
\end{lemma}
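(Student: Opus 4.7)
To show $\hat P=(P,P,\Id_P,f_P,g_P)$ is an $A_f$-$B_g$-bimodule I need, by Remark~\ref{bimodulos}, to verify three identities: (i) the left-module axiom \eqref{eqn:1a} with $T_M=\Id_P$, namely
$$af_P(b\otimes p)-f_P(ab\otimes p)+f_P(a\otimes bp)-f(a\otimes b)\,p=0;$$
(ii) the symmetric right-module axiom for $g_P$ and $g$; and (iii) the compatibility relation \eqref{eq:bimod_comp}. The corresponding verification for $\hat Q$ is entirely symmetric under the exchange $(P,A,\langle-,-\rangle_A)\leftrightarrow(Q,B,\langle-,-\rangle_B)$, so I would write the proof only for $\hat P$.

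\textbf{Main idea.} I would exploit the three–summand structure $f_P=\tfrac12(\alpha+\beta+\gamma)$, where
$\alpha(a\otimes p)=\sum_i f(a\otimes\langle p,q_i\rangle_A)p_i$ is the ``$A$-side'' expression, $\beta(a\otimes p)=\sum p'_{i_0}g(\langle q'_{i_0},ap'_{i_1}\rangle_B\otimes\langle q'_{i_1},p\rangle_B)$ is the ``$B$-side'' expression obtained via $g=\phi^2(f)$, and $\gamma(a\otimes p)=h^2(f)(a)\,p$ is the homotopy correction of Theorem~\ref{thm:transfer}. I would evaluate the left-module obstruction summand by summand. For $\alpha$, applying the 2-cocycle identity for $f$ to the triple $(a,b,\langle p,q_i\rangle_A)$ and collapsing $\sum_i\langle bp,q_i\rangle_A p_i=bp$ via Remark~\ref{key} produces a controlled residue involving $f(a\otimes b)\,p$. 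For $\beta$, the cocycle identity for $g$ applied to $(\langle q'_{i_0},ap'_{i_1}\rangle_B,\langle q'_{i_1},bp'_{i_2}\rangle_B,\langle q'_{i_2},p\rangle_B)$, followed by the transfer formula \eqref{eq:def_t_n} and the sliding relations \eqref{eq:cond_isos1}-\eqref{eq:cond_isos2}, rewrites everything in terms of $f$ and yields a second residue. The purpose of $\gamma$ is exactly to account for the discrepancy between these two residues: applied in the shape $h^{2}d^{2}=\Id-\psi^{1}\phi^{1}-d^{1}h^{1}$ of \eqref{eq: h homotopy}, together with $d^{1}h^{1}(f)(a,b)=a h^{1}(f)(b)-h^{1}(f)(ab)+h^{1}(f)(a)b$ contributing the missing boundary, the two halves $\tfrac12\alpha+\tfrac12\gamma$ and $\tfrac12\beta+\tfrac12\gamma$ each separately produce $\tfrac12 f(a\otimes b)p$, which sum to the required $f(a\otimes b)p$. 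Condition (ii) is proved by the mirror argument, splitting $g_P$ into its two summands and running the same cocycle/transfer manipulations on the right.

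\textbf{Compatibility and main obstacle.} For \eqref{eq:bimod_comp} I would check that the ``$A$-side'' piece of $f_P(a\otimes p)b$ cancels with the ``$A$-side'' piece of $ag_P(p\otimes b)$ and similarly that the ``$B$-side'' pieces match the ``$B$-side'' piece of $g_P(ap\otimes b)-f_P(a\otimes pb)$; using \eqref{eq:cond_isos1}-\eqref{eq:cond_isos2} to slide pairings $\langle p,q_i\rangle_A p_i$ across the tensor and telescoping the index sums with Remark~\ref{key}, each pair collapses to $0$. The homotopy term $\gamma$ has no $B$-variable and no $P$-variable on the right, so it plays no role in (iii). The principal obstacle is the bookkeeping for summand $\beta$: unpacking $g=\phi^2(f)$ produces a sum indexed by four $p$'s and four $q$'s, and reducing it back to an expression in $f$ on $A^{\otimes 2}$ alone requires repeated, careful application of \eqref{eq:cond_isos1}-\eqref{eq:cond_isos2} and the identities of Remark~\ref{key}; the conceptual payoff, which is what makes the whole construction work, is that the factor $\tfrac12$ and the homotopy $h^2$ are precisely what is needed to symmetrize the $A$-averaged and $B$-averaged versions of the obstruction so that the left and right module axioms and the compatibility all hold simultaneously.
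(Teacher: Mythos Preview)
Your strategy—split $f_P=\tfrac12(\alpha+\beta+\gamma)$, compute the left-module defect of each summand, and invoke the homotopy relation \eqref{eq: h homotopy}—is exactly the paper's. Two points in your execution, however, are wrong as written and would need correction.

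First, you invoke the wrong degree of \eqref{eq: h homotopy}. The identity you quote, $h^{2}d^{2}=\Id-\psi^{1}\phi^{1}-d^{1}h^{1}$, lives in degree~$1$, and ``$d^{1}h^{1}(f)$'' is a type error since $h^{1}$ cannot accept a $2$-cochain. What is actually needed is the degree-$2$ instance: since $f$ is a $2$-cocycle, $h^{3}d^{3}f=0$, whence $\psi^{2}\phi^{2}(f)+d^{2}h^{2}(f)=f$. The formula you meant is $d^{2}(h^{2}(f))(a_{0}\otimes a_{1})=a_{0}\,h^{2}(f)(a_{1})-h^{2}(f)(a_{0}a_{1})+h^{2}(f)(a_{0})\,a_{1}$, and this is precisely the defect of $\gamma$.

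Second, your ``two halves'' $\tfrac12\alpha+\tfrac12\gamma$ and $\tfrac12\beta+\tfrac12\gamma$ do not sum to $f_P$, and the first of them does not have defect $\tfrac12 f\cdot p$ (its defect is $\tfrac12(f+d^{2}h^{2}(f))\cdot p$). The correct accounting is simpler and is what the paper does: the defects of $\alpha,\beta,\gamma$ are respectively $f\cdot p$, $\psi^{2}(g)\cdot p=\psi^{2}\phi^{2}(f)\cdot p$, and $d^{2}h^{2}(f)\cdot p$, so the defect of $f_P=\tfrac12(\alpha+\beta+\gamma)$ is $\tfrac12\bigl(f+\psi^{2}\phi^{2}(f)+d^{2}h^{2}(f)\bigr)\cdot p=\tfrac12(f+f)\cdot p=f\cdot p$, as required. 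Incidentally, for $\beta$ the paper does not unpack $g=\phi^{2}(f)$ into a four-index sum in $f$; it applies the $g$-cocycle identity directly and recognises $\psi^{2}(g)$ from \eqref{eq:def_s_n}, which bypasses the index bookkeeping you flag as the main obstacle.
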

\begin{proof}
	We will only consider $\hat P$ since the same reasoning applies to $\hat Q$. It suffices to see that the following three equations are satisfied, see \eqref{eqn:1a} and \eqref{eq:bimod_comp},
	\begin{align}\label{eq:bimod1}
		a_0 f_P(a_1 \otimes  p) - f_P(a_0 a_1 \otimes p) + f_P(a_0 \otimes  a_1p) - f(a_0 \otimes  a_1) p =0 ;\\
		\label{eq:bimod2}
		p g(b_0 \otimes b_1) - g_P (p   b_0 \otimes  b_1) + g_P(p \otimes b_0  b_1) - g_P(p \otimes  b_0) b_1 =0;\\
		\label{eq:bimod3}
		ag_P(p \otimes  b) - g_P(ap \otimes b)+ f_P(a \otimes p b) - f_P(a \otimes  p)b =0.
	\end{align}
	We present an outline and some details of the needed computations. To see that $g_P$ satisfies \eqref{eq:bimod2}, we need to prove that both  $$\sum_{1 \leq i_0, i_1 \leq m} f( \langle p, q_{i_0} \rangle_A \otimes  \langle p_{i_0} b, q_{i_1} \rangle_A) p_{i_1} \quad \mbox{ and } \quad \sum_{i=1}^{m'} p'_i g(\langle q'_i, p \rangle_B \otimes b)$$ do. To verify the first one we add $1_B$ conveniently and apply \eqref{eq:cond_isos1}:
	\begin{align*}
		p \sum_{1 \leq i_0,i_1,i_2 \leq m} \langle q_{i_0}, f(\langle p_{i_0} , b_0q_{i_1}\rangle_A \otimes  \langle p_{i_1},b_1q_{i_2}\rangle_A)p_{i_2}\rangle_B - \sum_{1 \leq i_1,i_2 \leq m}
		f( \langle p 1_Bb_0, q_{i_1} \rangle_A \otimes \langle p_{i_1} b_1, q_{i_2} \rangle_A) p_{i_2}
		\\
		+ \sum_{1 \leq i_0,i_2 \leq m}  f( \langle p , q_{i_0} \rangle_A \otimes  \langle p_{i_0} b_0 1_Bb_1, q_{i_2} \rangle_A) p_{i_2}
		- \sum_{1 \leq i_0,i_1 \leq m} f( \langle p , q_{i_0} \rangle_A \otimes  \langle p_{i_0} b_0 , q_{i_1} \rangle_A) p_{i_1} b_1 1_B \\
		=  \sum_{1 \leq i_0,i_1,i_2 \leq m} \left (
		\langle p,q_{i_0}\rangle_A f(\langle p_{i_0},b_0q_{i_1}\rangle_A \otimes  \langle p_{i_1},b_1q_{i_2}\rangle_A)p_{i_2} -
		f( \langle p , q_{i_0}\rangle_A \langle p_{i_0}, b_0 q_{i_1} \rangle_A \otimes  \langle p_{i_1} b_1, q_{i_2} \rangle_A) p_{i_2} \right.
		\\  \left.
		+  f( \langle p , q_{i_0} \rangle_A \otimes  \langle p_{i_0} b_0 , q_{i_1}\rangle_A \langle p_{i_1}b_1, q_{i_2} \rangle_A) p_{i_2}
		- f( \langle p , q_{i_0} \rangle_A \otimes  \langle p_{i_0} b_0 , q_{i_1} \rangle_A) \langle p_{i_1} b_1  q_{i_2}\rangle_A p_{i_2} \right )
	\end{align*}
	and the last expression vanishes since $f$ is a $2$-cocycle.
	For the second part, since $g$ is a $2$-cocycle, we have that
		\begin{align*}
		\sum_{i=1}^{m'} p'_i g(\langle q'_i, p \rangle_B b_0 \otimes  b_1) &=  \sum_{i=1}^{m'} \left( \langle p'_i,q'_i\rangle_A pg(b_0 \otimes b_1)+ p'_i g(\langle q'_i, p \rangle_B \otimes  b_0b_1)- p'_i g(\langle q'_i, p \rangle_B \otimes  b_0)b_1 \right) \\
		& =   pg(b_0 \otimes b_1)+ \sum_{i=1}^{m'}  p'_i g(\langle q'_i, p \rangle_B \otimes  b_0b_1)- \sum_{i=1}^{m'} p'_i g(\langle q'_i, p \rangle_B \otimes  b_0)b_1,
	\end{align*}
	then
	\begin{align*}
		pg(b_0 \otimes b_1)-\sum_{i=1}^{m'} p'_i g(\langle q'_i, p \rangle_B b_0 \otimes  b_1)+\sum_{i=1}^{m'} p'_i g(\langle q'_i, p \rangle_B \otimes b_0b_1)- \sum_{i=1}^{m'} p'_i g(\langle q'_i, p \rangle_B \otimes  b_0)b_1 =0.
	\end{align*}
	The same conclusion can be drawn for $\sum_{i=1}^{m} f(a \otimes \langle p, q_i \rangle_A) p_i$ and equation \eqref{eq:bimod1}. Also, similar arguments prove that $\sum_{1 \leq i_0, i_1 \leq m'} p'_{i_0} g( \langle q'_{i_0}, a p'_{i_1} \rangle_B  \otimes \langle q'_{i_1} , p \rangle_B) $  satisfies
	\begin{align*}
		a_0 f_P(a_1  \otimes p) - f_P(a_0 a_1  \otimes p) + f_P(a_0  \otimes a_1p) - \psi^2(g)(a_0  \otimes a_1) p =0.
	\end{align*}
	Nevertheless, $f$ does not coincide with $\psi^2(g)=\psi^2(\phi{^2}(f))$.
	But, by \eqref{eq: h homotopy}, $f - \psi{^2}(\phi{^2}(f)) = d^2 h^2(f)$ and  the map $h^2(f)$ satisfies
	\begin{align*}
		a_0 h^2(f) (a_1)p- h^2(f) (a_0a_1)p+h^2(f)(a_0)a_1p- d^2h^2(f)(a_0 \otimes a_1)p=0.
	\end{align*}
	Finally, equation \eqref{eq:bimod3} follows since $f$ and $g$ are cocycles. Indeed, 
	\begin{multline*}
		\sum_{1 \leq i_0,i_1 \leq m} af( \langle p, q_{i_0} \rangle_A  \otimes \langle p_{i_0} b, q_{i_1} \rangle_A) p_{i_1} - \sum_{1 \leq i_0,i_1 \leq m} f( \langle ap, q_{i_0} \rangle_A  \otimes \langle p_{i_0} b, q_{i_1} \rangle_A) p_{i_1} \\
		+ \sum_{i_0=1}^m f(a  \otimes \langle p 1_B b, q_{i_0} \rangle_A) p_{i_0}
		- \sum_{i_0=1}^m f( a  \otimes\langle p, q_{i_0} \rangle_A)  p_{i_0} b 1_B \\
		+  \sum_{i_0=1}^{m'} 1_Aap'_{i_0}  g(\langle q'_{i_0},p\rangle_B  \otimes b) -  \sum_{i_0=1}^{m'} p'_{i_0}g(\langle q'_{i_0}, a 1_Ap\rangle_B  \otimes b)\\
		+ \sum_{1 \leq i_0,i_1 \leq m'}   p'_{i_0} g(\langle q'_{i_0}, ap'_{i_1}\rangle_B  \otimes \langle q'_{i_1}, p\rangle_B b) -  \sum_{1 \leq i_0,i_1 \leq m'}  p'_{i_0} g(\langle q'_{i_0}, ap'_{i_1}\rangle_B  \otimes \langle q'_{i_1}, p\rangle_B)b \\
		+  h^2(f)(a)(pb)- h^2(f)(a)p b= 0.
	\end{multline*}
\end{proof}

Now we shall describe the uple associated to  the bimodule $\hat P \otimes_{B_g} \hat Q$, and we will use this description to see that $\hat P \otimes_{B_g} \hat Q$ and $A_f$ are isomorphic as $A_f$-bimodules.  An analogous proof will lead to the same result for  $\hat Q \otimes_{A_f} \hat P$ and $B_g$.

First observe that the following equalities hold in $\hat P \otimes_{B_g} \hat Q$,  for all $x\in P$, $y\in Q$, $b\in B$:
\begin{align} \label{eq:cond1}
	(0,x) \otimes (0,y) &= (x,0)(0,1) \otimes (0,1)(y,0)=0,  \\
	(0,x) \otimes (y,0) &= (x,0) (0,1) \otimes (y,0) = (x,0) \otimes (0,y),  \label{eq:cond2} \\
	(xb,   g_P(x  \otimes b)) \otimes (y,0) & = (x,0)(b,0) \otimes (y,0) = (x,0) \otimes (by,  g_P(b  \otimes y)).\label{eq:cond4}
\end{align}
From \eqref{eq:cond1} and \eqref{eq:cond2} we can deduce that elements in $\hat P \otimes_{B_g} \hat Q$ can be expressed as
\begin{align*}
	\sum_i (x_i,0) \otimes (y_i,0) + \sum_j (x_j,0) \otimes (0,y_j).
\end{align*}
If $ (Z_0, Z_1, T_Z:Z_0 \to Z_1, f_Z: A \otimes_\K Z_0 \to Z_1, g_Z: Z_0 \otimes_\K A \to Z_1)$
is the uple associated to the bimodule $\hat P \otimes_{B_g} \hat Q$ as described in Remark \ref{bimodulos}, we know that $Z_1$ is the kernel of the $\K$-linear map
$$T: \hat P \otimes_{B_g} \hat Q \to \hat P \otimes_{B_g} \hat Q$$
given by $T(z) = (0,1) z$, for any $z\in \hat P \otimes_{B_g} \hat Q$. It is clear that
\begin{align*}
	Z_1= \Ker T = \{ \sum_j (x_j,0) \otimes (0,y_j) \} \simeq P \otimes_B Q
\end{align*}
where the last isomorphism follows because, for all $x\in P$, $y\in Q$, $b\in B$, we have
$$(xb,0) \otimes (0,y) = (0,xb) \otimes (y,0) = (x,0) (0,b) \otimes (y,0) = (x,0) \otimes (0,by).$$
Remark \ref{key} allows us to show that
\begin{align*}
	Z_0 = \{ \sum_{j} (x_j, \sum_{i=1}^{m'} g_P( p'_i  \otimes \langle q'_i, x_j \rangle_B) )\otimes (y_j, 0) \}
\end{align*}
is the complement of $Z_1$ in the $\K$-vector space $\hat P \otimes_{B_g} \hat Q$.
Also
$$T(\sum_{j} (x_j, \sum_{i=1}^{m'} g_P( p'_i  \otimes  \langle q'_i, x_j \rangle_B) )\otimes (y_j, 0))= \sum_j (0,x_j) \otimes (y_j,0)$$
and hence the map $T_Z :Z_0 \to Z_1$ induced by $T$ is an isomorphism, $Z_0 \cap Z_1= \{0\}$  and $\hat P \otimes_{B_g} \hat Q = Z_0 \oplus Z_1$ by a dimension argument.

The left $A$-module structure of $Z_0$ is given  by \eqref{eq:action A in M0}, that is, if $a \in A$ and
$$z_0=\sum_{j} (x_j, \sum_{i=1}^{m'} g_P( p'_i  \otimes \langle q'_i, x_j \rangle_B))\otimes (y_j, 0) \in Z_0,$$
then	
\begin{align*}
	a*z_0 &=T_Z^{-1}(a\cdot T_Z(z_0))	= T_Z^{-1} (\sum_{j} (0,ax_j)\otimes (y_j,0)) \\
	& = \sum_j (ax_j,  \sum_{i=1}^{m'} g_P( p'_i  \otimes \langle q'_i, ax_j \rangle_B ))\otimes (y_j,0),
\end{align*}
and the right $A$-module structure is induced by the $A_f$-module structure of $\hat Q$.
Hence $T_Z$ is an isomorphism of $A$-bimodules and $Z_0 \simeq Z_1 \simeq P\otimes_BQ$ as $A$-bimodules.

From \eqref{eq:f_M} we deduce that $f_Z: A \otimes_\K Z_0 \to Z_1$ is,  for all $a\in A$, $z_0\in Z_0$,  given by
\begin{align*}
	f_Z(a\otimes z_0) &=(a,0)z_0-a*z_0  = \sum_{j} (ax_j,  \sum_{i=1}^{m'} ag_P(p'_i  \otimes \langle q'_i, x_j \rangle_B)+ f_P(a  \otimes x_j))\otimes (y_j,0)\\
	& - \sum_{j} (ax_j,  \sum_{i=1}^{m'} g_P(p'_i  \otimes \langle q'_i, ax_j \rangle_B))\otimes (y_j,0)\\
	&=   \sum_{j} \left( 0, \sum_{i=1}^{m'}  ag_P(p'_i  \otimes \langle q'_i, x_j \rangle_B)- \sum_{i=1}^{m'} g_P(p'_i  \otimes \langle q'_i, ax_j \rangle_B )  +f_P(a  \otimes x_j) \right) \otimes (y_j,0).
\end{align*}
Analogously, we can construct $g_Z: Z_0 \otimes_\K B \to Z_1$.

Having described the bimodule $\hat P \otimes_{B_g} \hat Q$  in an appropriate way, we are now ready to construct the isomorphism needed for the Morita equivalence.

\begin{proposition}\label{lem:iso_Af_Bg}
	The $A_f$-bimodule $\hat P \otimes_{B_g} \hat Q$ is isomorphic to $A_f$ and the $B_g$-bimodule $\hat Q \otimes_{A_f} \hat P$ is isomorphic to $B_g$.
\end{proposition}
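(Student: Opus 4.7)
The plan is to realize the isomorphism at the level of the quintuples $(M_0, M_1, T_M, f_M, g_M)$ described in Remark \ref{bimodulos}. Read as a bimodule, the example shows $A_f$ corresponds to the quintuple $(A, A, \mathrm{Id}_A, f, f)$, while the computations displayed just above the proposition identify $\hat P \otimes_{B_g} \hat Q$ with a quintuple $(Z_0, Z_1, T_Z, f_Z, g_Z)$ in which $T_Z : Z_0 \to Z_1$ is an $A$-bimodule isomorphism and both $Z_0$ and $Z_1$ are isomorphic to $P \otimes_B Q$ as $A$-bimodules. Since $F$ is an equivalence, it suffices to produce an isomorphism between these two quintuples in $\mathcal C$.

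First I would define the candidate morphism in $\mathcal C$ by setting $u_2 : Z_1 \to A$ to be the composite $Z_1 \simeq P \otimes_B Q \xrightarrow{\langle -, -\rangle_A} A$, and $u_0 := u_2 \circ T_Z : Z_0 \to A$. Both $u_0$ and $u_2$ are $A$-bimodule isomorphisms by the Morita equivalence between $A$ and $B$, and with these choices the square in the definition of a morphism in $\mathcal C$ commutes against $T_{A_f} = \mathrm{Id}_A$ automatically. What remains is to choose a $\K$-linear map $u_1 : Z_0 \to A$ so that the compatibility \eqref{eq:u_1} — and its right-handed analog coming from $g_Z$ — holds; granted such a $u_1$, the triple $(u_0, u_1, u_2)$ applied under $F$ yields the desired $A_f$-bimodule isomorphism $\hat P \otimes_{B_g} \hat Q \xrightarrow{\sim} A_f$.

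The main obstacle is the construction of $u_1$. Substituting the explicit formulas for $f_P$ and $g_P$ into the expression for $f_Z$ written above, and then applying $u_2$, the two $g_P$-halves reorganize under \eqref{eq:cond_isos1}--\eqref{eq:cond_isos2} and Remark \ref{key} into $\psi^2(g)(a \otimes u_0(z_0))$, while the $f_P$-half produces $f(a \otimes u_0(z_0))$ together with a term involving $h^2(f)$. The homotopy identity \eqref{eq: h homotopy}, in the form $f - \psi^2(\phi^2(f)) = d^2 h^2(f)$ (valid because $f$ is a cocycle and $g = \phi^2(f)$), shows that the net obstruction to taking $u_1 = 0$ is precisely a $2$-coboundary. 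Choosing $u_1$ to be the component of $h^2(f)$ dictated by the half-factors in $f_P$ and $g_P$ then forces \eqref{eq:u_1} to hold, and the right-module condition is verified by the symmetric reorganization applied to the formula defining $g_Z$.

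The second assertion $\hat Q \otimes_{A_f} \hat P \simeq B_g$ is obtained by repeating the argument with $A, B$ and $P, Q$ exchanged and $\psi^2$ in place of $\phi^2$, invoking the analogous identity $g - \phi^2(\psi^2(g)) = d^2 h^2(g)$ provided by Theorem \ref{thm:transfer}. The computational heart of both identifications is exactly the bookkeeping already encountered in the proof of Lemma \ref{lem:hatP and hatQ}: the halving pattern in the definitions of $f_P, g_P, f_Q, g_Q$ is precisely tuned so that, after tensoring over the deformed algebra and pairing with $\langle -, -\rangle_A$ (respectively $\langle -, -\rangle_B$), the cocycle defects of $f$ against $\psi^2(g)$ cancel modulo $d^2 h^2(f)$, the residual coboundary being absorbed exactly by the component $u_1$.
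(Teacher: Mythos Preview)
Your overall strategy matches the paper's: define $u_0$ and $u_2$ via $\langle -,-\rangle_A$ (exactly the paper's $w_0$ and $w_2$), and then produce a suitable $u_1$. The gap is in the last step. You never write down $u_1$, and your description of it as ``the component of $h^2(f)$ dictated by the half-factors'' does not match what is actually needed. The paper's $w_1$ is the explicit expression
\[
w_1(z_0)=\sum_j\Bigl(\sum_i\langle g_P(p'_i\otimes\langle q'_i,x_j\rangle_B),y_j\rangle_A
-\sum_k\langle f_P(\langle x_j,q_k\rangle_A\otimes p_k),y_j\rangle_A
+\sum_k f(\langle x_j,q_k\rangle_A\otimes\langle p_k,y_j\rangle_A)\Bigr),
\]
which is not a piece of $h^2(f)$ but rather a combination of $g_P$, $f_P$ and $f$ suggested by the decomposition in Remark~\ref{key}. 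The verification of \eqref{eq:u_1} in the paper then proceeds by direct computation: one expands $w_1(a*z_0)-aw_1(z_0)$ and $-w_2(f_Z(a\otimes z_0))+f(a\otimes w_0(z_0))$ separately, uses \eqref{eq:bimod1} and the cocycle condition for $f$, and observes that the $g_P$-contributions on each side are identical so they cancel upon comparison. Neither $\psi^2(g)$ nor the homotopy identity \eqref{eq: h homotopy} enters this verification.

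Your claimed reorganization---that after applying $u_2$ the $g_P$-terms in $f_Z$ assemble into $\psi^2(g)(a\otimes u_0(z_0))$ and the $f_P$-term into $f(a\otimes u_0(z_0))$ plus an $h^2(f)$ correction---is not substantiated and, given that each of $g_P$ and $f_P$ is itself a mixture of $f$-, $g$- and $h^2(f)$-parts with coefficients $\tfrac12$, is unlikely to hold in that clean form. The homotopy identity is indeed what makes the construction of $f_P,g_P$ in Lemma~\ref{lem:hatP and hatQ} consistent, but at the present stage one needs the concrete $w_1$ and a direct check, not a second appeal to \eqref{eq: h homotopy}. Once $w_1$ is in hand, the inverse is simply $(w_0^{-1},-w_2^{-1}w_1w_0^{-1},w_2^{-1})$.
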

\begin{proof}
	We start by constructing a morphism
	\begin{align*}
		w: (Z_0, Z_1, T_Z, f_Z, g_Z) \to (A,A,\Id, f, g).
	\end{align*}
	That is, $w=(w_0, w_1, w_2)$ where $w_0 \in \Hom_A(Z_0, A), w_1 \in \Hom_\K (Z_0, A)$ and $w_2 \in \Hom_A (Z_1, A)$ are such that  the diagram
	\begin{align*}
		\xymatrix{
			Z_0 \ar[d]_{T_Z} \ar[r]^{w_0} & A  \ar[d]^{\operatorname{\Id}}\\
			Z_1  \ar[r]^{w_2}  & A }
	\end{align*}
	commutes and, for $z_0 \in Z_0$,
	\begin{align}\label{eq:morfismo}
		w_1(a*z_0) = a w_1 (z_0) - w_2 (f_Z(a \otimes z_0)) + f(a \otimes w_0(z_0)).
	\end{align}
	For
	$$z_0=\sum_{j} (x_j,  \sum_{i=1}^{m'} g_P( p'_i\otimes \langle q'_i, x_j \rangle_B)) \otimes (y_j, 0),$$
	the desired morphisms are defined by
	\begin{align}
			w_0(z_0) &= \sum_{j} \langle x_j,y_j\rangle_A=w_2(\sum_j (x_j,0) \otimes (0,y_j)),
		\\
		 w_1(z_0) &= \sum_{j} \left( \sum_{i=1}^{m'} \langle g_P(p'_i\otimes \langle q'_i, x_j\rangle_B), y_j\rangle_A  \right.
		 - \sum_{k=1}^{m} \langle f_P( \langle x_j, q_k\rangle_A \otimes p_k), y_j\rangle_A
		\\ \notag
		& \qquad  \left. + \sum_{k=1}^{m} f( \langle x_j, q_k \rangle_A \otimes \langle p_k, y_j\rangle_A) \right)
	\end{align}
	where the last formula is suggested by Remark \ref{key}.
	The commutativity of the diagram is immediate, and equality \eqref{eq:morfismo} holds since
	using  \eqref{eq:bimod1} and that $f$ is a cocycle, for all $a\in A$, $z_0\in Z_0$, we have
	\begin{align*}
		w_1(a*z_0) - a w_1 (z_0) =& \sum_{j} \sum_{i=1}^{m'} \langle g_P(p'_i\otimes \langle q'_i, ax_j\rangle_B), y_j\rangle_A
		- a \sum_{j} \sum_{i=1}^{m'}\langle g_P(p'_i\otimes \langle q'_i, x_j\rangle_B), y_j\rangle_A \\
		& + \sum_{j} \sum_{k=1}^{m}\langle f( a \otimes \langle x_j, q_k\rangle_A) p_k, y_j\rangle_A
		- \sum_{j} \langle f_P( a \otimes x_j), y_j\rangle_A \\
		& + \sum_{j} f(a,  \langle x_j, y_j\rangle_A)
		- \sum_{j} \sum_{k=1}^{m}f( a \otimes \langle x_j, q_k \rangle_A)  \langle p_k, y_j\rangle_A,
	\end{align*}
	where the third term cancels with the last one.
	
	On the other hand,  for all $a\in A$, $z_0\in Z_0$, the equality
	\begin{align*}
		- w_2 (f_Z(a \otimes z_0)) + f(a \otimes w_0(z_0))  = &- a \sum_{j} \sum_{i=1}^{m'} \langle g_P(p'_i\otimes \langle q'_i, x_j\rangle_B), y_j\rangle_A  \\
		&  + \sum_{j} \sum_{i=1}^{m'} \langle g_P(p'_i\otimes \langle q'_i, ax_j\rangle_B), y_j\rangle_A  \\
		& - \sum_{j} \langle f_P( a \otimes x_j), y_j\rangle_A + \sum_{j} f(a,  \langle x_j, y_j\rangle_A)
	\end{align*}
	holds and completes the first part of the proof.
	
	It only remains to prove that $w=(w_0, w_1, w_2)$ is an isomorphism.  A direct computation shows that
	\begin{align*}
		(w_0^{-1}, - w_2^{-1} w_1 w_0^{-1}, w_2^{-1}):  (A,A,\Id, f, g) \to (Z_0, Z_1, T_Z, f_Z, g_Z)
	\end{align*}
	is a morphism, and it is the inverse of $w$. 
\end{proof}

\begin{theorem}
	Let $A$ and $B$ be Morita equivalent $\K$-algebras. Let $[f]\in\HH^2(A)$ and $[g]\in\HH^2(B)$ connected via the isomorphism $\HH^2(A)\simeq\HH^2(B)$. Then, the infinitesimal deformations $A_f$ and $B_g$ are Morita equivalent.
\end{theorem}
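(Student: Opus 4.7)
The plan is to assemble the bimodule construction developed in this section into a Morita context between $A_f$ and $B_g$. Essentially all the substantive work has already been done: Lemma \ref{lem:hatP and hatQ} and Proposition \ref{lem:iso_Af_Bg} together produce the $A_f$-$B_g$-bimodule $\hat P$, the $B_g$-$A_f$-bimodule $\hat Q$, and bimodule isomorphisms $\hat P\otimes_{B_g}\hat Q\simeq A_f$ and $\hat Q\otimes_{A_f}\hat P\simeq B_g$. Once these data are in hand, the theorem is just a reformulation in the language of Morita contexts.

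The first step, and really the only point not already settled in Section \ref{sec:morita_eq}, is to normalize the choice of cocycles. The hypothesis only identifies the classes $[f]\in\HH^2(A)$ and $[g]\in\HH^2(B)$ under the transfer isomorphism $\widetilde{\phi^2}$ of Theorem \ref{thm:transfer}, so a priori the chosen $g$ differs from $\phi^2(f)$ by a Hochschild coboundary. Since cohomologous $2$-cocycles give rise to equivalent, and hence isomorphic as $\K[t]/(t^2)$-algebras, infinitesimal deformations, I would replace $g$ by $\phi^2(f)$ and work with the isomorphic algebra $B_{\phi^2(f)}$ in place of $B_g$; because Morita equivalence of algebras is an isomorphism invariant, this substitution does not affect what has to be proved.

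After this reduction, the plan is to invoke Lemma \ref{lem:hatP and hatQ} to produce $\hat P$ and $\hat Q$, then invoke Proposition \ref{lem:iso_Af_Bg} to obtain the two bimodule isomorphisms, and finally conclude that the quadruple $(\hat P, \hat Q, \langle -,-\rangle_{A_f}, \langle -,-\rangle_{B_g})$ is a surjective Morita context between $A_f$ and $B_g$, whence these two algebras are Morita equivalent. The main obstacle is therefore conceptual rather than computational: all the real work sits in the formulas for $f_P, g_P, f_Q, g_Q$ (which are designed precisely so that the transfer cocycle $\phi^2(f)$ and the homotopy $h^2$ from Theorem \ref{thm:transfer} enter on the correct sides) and in verifying their bimodule compatibilities, both of which were already absorbed into Lemma \ref{lem:hatP and hatQ} and Proposition \ref{lem:iso_Af_Bg}. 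The theorem itself then only requires the short normalization argument above, together with the invocation of those two previous results.
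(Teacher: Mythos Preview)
Your proposal is correct and follows the paper's approach exactly: the paper's proof is the two-line invocation of Lemma \ref{lem:hatP and hatQ} and Proposition \ref{lem:iso_Af_Bg}, relying on the standing convention $g=\phi^2(f)$ fixed at the start of Section \ref{sec:morita_eq}. Your additional normalization step, reducing an arbitrary representative $g$ of $\widetilde{\phi^2}([f])$ to $\phi^2(f)$ via the fact that cohomologous $2$-cocycles yield isomorphic deformations, is a welcome clarification that the paper leaves implicit.
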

\begin{proof}
	By Lemma \ref{lem:hatP and hatQ} we have bimodules $\hat P$ and $\hat Q$ that give us the Morita equivalence by Proposition \ref{lem:iso_Af_Bg}.
\end{proof}

\begin{remark} From Remark \ref{stable} one can infer that 
the results obtained in this section for Morita equivalent algebras could probably be generalized to the stable equivalence of Morita type context. For that, it would be necessary to take special care in proving that the bimodules $\hat P$ and $\hat Q$ are projective as well as showing the projectivity of the direct sumands $\hat X$ and $\hat Y$ appearing in $\hat P \otimes_{B_g} \hat Q\simeq A_f\oplus \hat X$ and  $\hat Q \otimes_{A_f} \hat P\simeq B_g\oplus \hat Y$.
\end{remark}

\section{Quiver associated to $A_f$}\label{sec:quiver Q_f}

Let $A$ be a finite dimensional algebra over an algebraically closed field $\K$ and let $f : A \otimes_k A \to A$ be a Hochschild 2-cocycle.  The aim of this section is to  describe the presentation by  quiver and relations for $A_f$.
Since deformations behave well with Morita equivalence, and $A$ is Morita equivalent to a quotient of a path algebra, we may assume that $A=\K Q/I$. 

\subsection{Presentation by quiver and relations of $A_f$} Set $A=\K Q/I$.
Since the set of equivalence classes of paths in $Q$ generates $A$, we can fix a set $\P$ of paths in $Q$ such that $\{ \og \colon \gamma \in \P \}$ is a basis of $A$. Let $\Gamma= \{\rho_1, \cdots, \rho_m\}$ be a set of relations generating $I$. 

As we mentioned in Subsection \ref{subsect: quiver}, we say that an arrow $\alpha$ divides a path $w$, and denote $\alpha / w$, if $w= L_\alpha(w)\alpha R_\alpha(w)$, where $L_\alpha(w)$ and $R_\alpha(w)$ are paths in $Q$. 

Since $\K$ is a field, $\HH^n(A) \simeq \Ext^n_{A-A}(A,A)$, and hence one can use any projective resolution of the $A$-bimodule $A$ to compute this cohomology. As we said in Subsection \ref{subsect:HochCohom}, we may assume that $f \in \Hom_{E-E}(\rad A \otimes_E \rad A, A)$, with $E=\K Q_0$. This holds since $(A \otimes_E \rad A^* \otimes_E A, d_*)$ is a projective resolution of the $A$-bimodule $A$, see  \cite[Lemma 2.1]{Cibils}. 
However, we can construct a more convenient projective resolution by following the method developed in \cite{RR2} when dealing with monomial algebras.  Even though we are now working in a more general setting, the first degrees behave in the same way, and since we are interested in the second cohomology group, it is enough to describe this resolution up to degree $2$.

Define the sequence
$$(S_*, \delta_*):  \qquad  \cdots  \to A \otimes_E I \otimes_E A \stackrel{\delta_{2}}{\longrightarrow}A \otimes_E \K Q_1 \otimes_E A \stackrel{\delta_{1}}{\longrightarrow} A \otimes_E \K Q_0 \otimes_E A \stackrel{\delta_{0}}{\longrightarrow} A \to 0$$ 
where, for any $\alpha \in Q_1$ and  $\sum_i \lambda_i w_i \in I$, with all $w_i$ paths in $Q$,
\begin{align*}
	\delta_0(1 \otimes e_i \otimes 1) & = e_i,\\
	\delta_1(1 \otimes  \alpha\otimes  1) & = \oa \otimes e_{t(\alpha)} \otimes 1 - 1    \otimes e_{s(\alpha)} \otimes \oa,  \\
	\delta_2 (1 \otimes  \sum_i \lambda_i w_i \otimes 1) &= \sum_i \lambda_i \sum_{\alpha \in Q_1: \alpha/w_i} \oL_\alpha(w_i) \otimes \alpha \otimes \oR_\alpha(w_i).
\end{align*}

\begin{lemma}
	The sequence $(S_*, \delta_*)$ is the starting point of a projective resolution of the $A$-bimodule $A$. Moreover, the $E$-$A$-bilinear maps 
	\begin{align*}
		c_0: & A  \to A \otimes_E \K Q_0 \otimes_E A, \\
		c_1:  & A \otimes_E \K Q_0 \otimes_E A   \to A \otimes_E \K Q_1 \otimes_E A, \\
		c_2: & A \otimes_E \K Q_1 \otimes_E A   \to  A \otimes_E I \otimes_E A,
	\end{align*}
	defined by
	\begin{align*}
		c_0(1) & = \sum_{i\in Q_0} 1 \otimes e_i \otimes 1,\\
		c_1 (\og \otimes e_{t(\gamma)} \otimes 1) & = \sum_{ \alpha \in Q_1: \alpha / \gamma } \oL_\alpha(\gamma) \otimes \alpha \otimes \oR_\alpha(\gamma), \qquad & \mbox{for any $\gamma \in \P$},\\
		c_2 (\og \otimes \alpha \otimes 1)& =  1 \otimes (\gamma \alpha- \sum_i \lambda_i { \gamma _i}) \otimes 1, &\mbox{for any $\gamma \in \P$,  $\gamma \alpha - \sum_i \lambda_i { \gamma_i} \in I$,$ {\gamma_i \in \P}$},
	\end{align*}
	is a contracting homotopy in the first degrees.
\end{lemma}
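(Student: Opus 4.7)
My plan is to verify two assertions: $(S_*,\delta_*)$ is a complex whose three terms are projective $A$-bimodules, and $c_0,c_1,c_2$ satisfy the homotopy identities $\delta_0 c_0=\Id_A$, $c_0\delta_0+\delta_1 c_1=\Id$ and $c_1\delta_1+\delta_2 c_2=\Id$. Exactness at $A$ and at $A\otimes_E\K Q_0\otimes_E A$ will follow from the homotopy, which is what is needed to call $S_*$ the starting point of a projective resolution. For projectivity I would invoke the semisimplicity of $E=\K Q_0$: the enveloping algebra $E\otimes_\K E^{\mathrm{op}}$ is then semisimple, so every $E$-bimodule $V$ is projective over it, and consequently $A\otimes_E V\otimes_E A$ is a direct summand of a free $A$-bimodule; this applies to $V=\K Q_0,\K Q_1,I$.

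To check that $\delta_*$ is a complex, the equality $\delta_0\delta_1=0$ is immediate on generators. For $\delta_1\delta_2=0$, I would use $E$-bilinearity to reduce to evaluating at $1\otimes w\otimes 1$ for a single path $w=\alpha_1\cdots\alpha_n$; expanding produces a telescoping sum in which consecutive terms cancel because $t(\alpha_k)=s(\alpha_{k+1})$, and only $\ow\otimes e_{t(w)}\otimes 1-1\otimes e_{s(w)}\otimes\ow$ survives. A general $r\in I$ splits by $E$-bilinearity into pieces $e_i r e_j\in I$ whose path components share source and target, and the vanishing $\overline{r}=0$ in $A$ then forces $\delta_1\delta_2(1\otimes r\otimes 1)=0$.

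For the homotopy, $c_0$ and $c_1$ are defined on $E$-$A$-generators so well-definedness is transparent; for $c_2$ I would note that $\P$ being a $\K$-basis of $A$ makes the expansion $\overline{\gamma\alpha}=\sum_i\lambda_i\overline{\gamma_i}$ unique, so $\gamma\alpha-\sum_i\lambda_i\gamma_i$ is a canonical element of $I$. The relation $\delta_0 c_0=\Id_A$ is trivial. For $c_0\delta_0+\delta_1 c_1=\Id$, I would evaluate at $\og\otimes e_{t(\gamma)}\otimes 1$: the telescoping argument of the previous paragraph gives $\delta_1 c_1(\og\otimes e_{t(\gamma)}\otimes 1)=\og\otimes e_{t(\gamma)}\otimes 1-1\otimes e_{s(\gamma)}\otimes\og$, which combines with $c_0\delta_0(\og\otimes e_{t(\gamma)}\otimes 1)=1\otimes e_{s(\gamma)}\otimes\og$ to give the identity.

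The main obstacle will be $c_1\delta_1+\delta_2 c_2=\Id$ at a generator $\og\otimes\alpha\otimes 1$. Applying $\delta_1$ produces $\og\,\oa\otimes e_{t(\alpha)}\otimes 1-\og\otimes e_{s(\alpha)}\otimes\oa$; rewriting $\og\,\oa=\sum_i\lambda_i\og_i$ allows $c_1$ to act on the first term, while $c_1$ on the second term gives $-\sum_{\beta/\gamma}\oL_\beta(\gamma)\otimes\beta\otimes\oR_\beta(\gamma)\oa$ by $E$-$A$-bilinearity. On the other hand, $\delta_2 c_2(\og\otimes\alpha\otimes 1)$ expands $\gamma\alpha-\sum_i\lambda_i\gamma_i$ via the telescoping formula, and the key observation is that arrows dividing the path $\gamma\alpha$ split disjointly into arrows dividing $\gamma$ (contributing $\sum_{\beta/\gamma}\oL_\beta(\gamma)\otimes\beta\otimes\oR_\beta(\gamma)\oa$) plus the final occurrence of $\alpha$ (contributing $\og\otimes\alpha\otimes 1$). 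Matching all contributions term-by-term, using uniqueness of the basis expansion of $\og\,\oa$, one sees every term cancels except $\og\otimes\alpha\otimes 1$. This bookkeeping of arrow-divisions is the most delicate step; once completed, $S_*$ is exact in degrees $0$ and $1$ and is indeed the start of the desired projective resolution.
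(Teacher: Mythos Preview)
Your proposal is correct and follows essentially the same route as the paper: projectivity via semisimplicity of $E$, the complex relations by direct (telescoping) computation, and the three homotopy identities $\delta_0 c_0=\Id$, $c_0\delta_0+\delta_1 c_1=\Id$, $c_1\delta_1+\delta_2 c_2=\Id$ verified on generators exactly as the paper does. Your explicit remark that arrows dividing $\gamma\alpha$ split into those dividing $\gamma$ plus the final occurrence of $\alpha$ is precisely the bookkeeping the paper uses implicitly in its last displayed computation.
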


\begin{proof}
Since $E$ is semisimple, for any $E$-bimodule $X$ we have that $A\otimes_E X \otimes_E A$ is a projective $A$-bimodule.
	A direct computation shows that $\delta_0 \delta_1=0$ and $\delta_1 \delta_2=0$. Concerning the homotopy, for all $\gamma\in \P$, we have
	\begin{align*}
		&\delta_0 c_0 (\og)  = \delta_0 (1 \otimes e_{s(\gamma)} \otimes \og) = \og, \\
		&c_0 \delta_0  (\og \otimes e_{t(\gamma)} \otimes 1)   = c_0 (\og)  = 1  \otimes e_{s(\gamma)} \otimes \og,
	\end{align*}
	\begin{align*}
		\delta_1 c_1 (\og \otimes e_{t(\gamma)} \otimes 1) & = \delta_1 (\sum_{ \alpha \in Q_1: \alpha / \gamma } \oL_\alpha(\gamma) \otimes \alpha \otimes \oR_\alpha(\gamma)) \\ \notag
		& = \sum_{ \alpha \in Q_1: \alpha / \gamma } (\oL_\alpha(\gamma)  \oa \otimes e_{t(\alpha)} \otimes \oR_\alpha(\gamma) - \oL_\alpha(\gamma) \otimes e_{s(\alpha)} \otimes  \oa \oR_\alpha(\gamma)) \\ \notag
		& = \og  \otimes e_{t(\gamma)}  \otimes 1 - 1  \otimes e_{s(\gamma)} \otimes \og,
	\end{align*}
	that is,  $\delta_0 c_0= \Id$ and $c_0 \delta_0 + \delta_1 c_1= \Id$. Finally, let $\gamma\in\P$ and $\alpha\in Q_1$; if $\og \oa = \sum_i \lambda_i \og_i$  with $\gamma_i \in \P$,  then
	\begin{align*}
		c_1 \delta_1  (\og \otimes  \alpha\otimes  1)& = c_1 ( \og \oa \otimes e_{t(\alpha)} \otimes 1 - \og \otimes e_{s(\alpha)}\otimes \oa) \\ \notag
		& = \sum_i \lambda_i c_1 ( \og_i  \otimes e_{t(\gamma_i)} \otimes 1) - c_1(\og \otimes e_{s(\alpha)}\otimes \oa) \\ \notag
		& = \sum_i \lambda_i \sum_{ \beta \in Q_1: \beta / \gamma_i } \oL_\beta(\gamma_i) \otimes \beta \otimes \oR_\beta(\gamma_i) - \sum_{ \beta \in Q_1: \beta / \gamma } \oL_\beta(\gamma) \otimes \beta \otimes \oR_\beta(\gamma) \oa, \\
		\delta_2 c_2   (\og \otimes  \alpha\otimes  1)& = \delta_2 (1 \otimes (\gamma \alpha- \sum_i \lambda_i \gamma_i) \otimes 1) \\ \notag
		& = \sum_{ \beta \in Q_1: \beta / \gamma \alpha } \oL_\beta(\gamma \alpha) \otimes \beta \otimes \oR_\beta(\gamma \alpha)  -  \sum_i \lambda_i \sum_{ \beta \in Q_1: \beta / \gamma_i } \oL_\beta(\gamma_i) \otimes \beta \otimes \oR_\beta(\gamma_i).
	\end{align*} 
	Hence $(c_1 \delta_1 + \delta_2 c_2) (\og \otimes  \alpha\otimes  1) = \og \otimes  \alpha\otimes  1$ and the proof is done.
\end{proof}

\medskip

Since $(S_*, \delta_*)$ and $(A \otimes_E \rad A^* \otimes_E A, d_*)$ are projective resolutions of the $A$-bimodule $A$, a well known result ensures the existence of comparison morphisms
$$F_*: S_* \to A \otimes_E \rad A^* \otimes_E A \quad \mbox{ and } \quad  G_*: A \otimes_E \rad A^* \otimes_E A \to S_*.$$
Again from \cite{RR2} we can deduce the formula for  the comparison morphism $F$ in the first degrees.  For $\alpha \in Q_1$, $ \sum_j \lambda_j \ \alpha^j_1 \cdots \alpha^j_{s_j} \in I$, 
\begin{align*}
	F_0(1 \otimes e \otimes 1) & = e \otimes 1, \\ 
	F_1(1 \otimes \alpha \otimes 1) & = 1 \otimes \oa  \otimes 1, \\
	F_2 (1 \otimes  \sum_j \lambda_j \ \alpha^j_1 \cdots \alpha^j_{s_j} \otimes 1) & = \sum_j \lambda_j \sum_{i=1}^{s_j-1} 1 \otimes \oa^j_1 \cdots \oa^j_{i} \otimes \oa^j_{i+1} \otimes \oa^j_{i+2} \cdots \oa^j_{s_j} .
\end{align*}

Using the identification
$$ \Hom_{E-E}(\rad A \otimes_E \rad A, A) \simeq \Hom_{A-A}(A \otimes \rad A^{\otimes_2 } \otimes A, A)$$  given by $\tilde{f}(1\otimes w_1 \otimes w_2 \otimes 1) = f(w_1 \otimes w_2)$, 
we have that if  $f \in \Hom_{E-E}(\rad A \otimes_E \rad A, A)$  is a Hochschild 2-cocycle,  $\tilde{f} \circ F_2 \in \Hom_{E-E}(A \otimes I  \otimes A , A)$ is also a $2$-cocycle and they represent the same element in $\HH^2(A)$. 

In the following definition we will introduce a new map $\hat f : \K Q \to A$ associated to $f$ that will be needed in the main theorem of this section.

\begin{definition}\label{defi} Let $\hat f : \K Q \to A$ be the $\K$-linear map defined in paths as follows:
	\[
	\hat f (\alpha_1 \cdots \alpha_s)  =
	\begin{cases}
	0,    & \mbox{if $s=0,1$,} \\
	\sum_{i= 1}^{s-1}f(  \oa_1 \cdots \oa_{i}  \otimes \oa_{i+1} )  \ \oa_{i+2} \cdots \oa_{s},  \quad & \mbox{if $s \geq 2$.}
	\end{cases}
	\]
	In particular, if $\rho \in I$ then
	$$\hat{f} (\rho) =  (\tilde{f} \circ F_2) (1 \otimes \rho \otimes 1).$$
\end{definition}

\begin{remark}\label{rem: imagen f}
	For any Hochschild $2$-cocycle $f$ we know that $[f] = [\tilde f F_2 G_2]$ since $F_* G_*$ is homotopic to the identity.
	Hence we can replace $f$ by $\tilde f F_2 G_2$, and since $\Im \tilde f F_2 G_2  \subset  \hat f (I)$, from now on we will assume that the representative $f$ we have chosen satisfies $\Im f \subset \hat f (I)$.
\end{remark} 
Next lemma shows the relation between the map $\hat f$ and the product in $A_f$.

\begin{lemma} \label{Im f}
	Let $w=  \alpha_1 \cdots \alpha_{s}$ be a path in $Q$ with $s \geq 1$. Then  the equality 
	\begin{align*}
		( \oa_1,0) \cdots (\oa_{s},0)  &= (\ow , \hat f(w))
	\end{align*}
	holds in $A_f$. In particular, if $\rho= \sum_j \lambda_j \ \alpha_1^j \cdots \alpha_{s_j}^j \in I$ then
	\begin{align*}
		\sum_j \lambda_j  ( \oa^j_1,0) \cdots (\oa^j_{s_j},0)  &= (0, \hat f(\rho)).
	\end{align*}
\end{lemma}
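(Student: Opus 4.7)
The plan is to prove the first assertion by induction on the length $s$ of the path $w$, and then deduce the second assertion by linearity together with the fact that $\rho \in I$ implies $\sum_j \lambda_j \overline{\alpha_1^j \cdots \alpha_{s_j}^j} = 0$ in $A$.

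For the base case $s=1$, the statement reads $(\overline{\alpha}_1, 0) = (\overline{w}, \hat f(w))$, which is immediate since $\hat f$ vanishes on paths of length $0$ or $1$ by definition. For the inductive step, assume the identity for paths of length $s-1$, and write $w = w' \alpha_s$ with $w' = \alpha_1 \cdots \alpha_{s-1}$. Then
\begin{align*}
(\overline{\alpha}_1,0)\cdots (\overline{\alpha}_s,0)
  &= (\overline{w'},\hat f(w'))(\overline{\alpha}_s,0) \\
  &= \bigl(\overline{w'}\,\overline{\alpha}_s,\ \hat f(w')\overline{\alpha}_s + f(\overline{w'}\otimes \overline{\alpha}_s)\bigr),
\end{align*}
using the induction hypothesis and the multiplication rule of $A_f$ from Definition \ref{def:inf def} (with $b_1=0$).

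The key computation is to check that the second coordinate is precisely $\hat f(w)$. Splitting the defining sum of $\hat f(w)$ at the index $i=s-1$ gives
\begin{align*}
\hat f(w)
  &= \sum_{i=1}^{s-1} f(\overline{\alpha}_1\cdots \overline{\alpha}_i \otimes \overline{\alpha}_{i+1})\,\overline{\alpha}_{i+2}\cdots \overline{\alpha}_s \\
  &= \Bigl(\sum_{i=1}^{s-2} f(\overline{\alpha}_1\cdots \overline{\alpha}_i \otimes \overline{\alpha}_{i+1})\,\overline{\alpha}_{i+2}\cdots \overline{\alpha}_{s-1}\Bigr)\overline{\alpha}_s + f(\overline{w'}\otimes\overline{\alpha}_s) \\
  &= \hat f(w')\,\overline{\alpha}_s + f(\overline{w'}\otimes \overline{\alpha}_s),
\end{align*}
which matches exactly what was obtained from the product. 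Combined with $\overline{w'}\,\overline{\alpha}_s = \overline{w}$, this closes the induction.

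For the second assertion, since $\rho = \sum_j \lambda_j \alpha_1^j\cdots \alpha_{s_j}^j \in I$ is an admissible relation with each $s_j\geq 2$, the first part applies to every summand and gives
\[
\sum_j \lambda_j (\overline{\alpha}_1^j,0)\cdots (\overline{\alpha}_{s_j}^j,0) = \Bigl(\sum_j \lambda_j \overline{\alpha_1^j\cdots \alpha_{s_j}^j},\ \sum_j \lambda_j \hat f(\alpha_1^j\cdots \alpha_{s_j}^j)\Bigr) = (0,\hat f(\rho)),
\]
where the first coordinate vanishes because $\rho\in I$, and the second coordinate is $\hat f(\rho)$ by $\K$-linearity of $\hat f$. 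No step here poses a serious obstacle; the only subtlety is making sure the endpoint $i=s-1$ of the summation in $\hat f(w)$ is separated off correctly to recover the product formula in $A_f$.
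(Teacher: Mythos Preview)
Your proof is correct and follows exactly the same approach as the paper, which simply states that ``an inductive procedure on $s$'' gives the formula; you have just written out that induction in full detail. The second assertion is handled identically.
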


\begin{proof} An inductive procedure on $s$ shows that 
	$$( \oa_1,0) \cdots (\oa_{s},0) = (\oa_1 \cdots \oa_{s} ,\sum_{i= 1}^{s-1}f(  \ \oa_1 \cdots \oa_{i}  \otimes \oa_{i+1} )  \ \oa_{i+2} \cdots \oa_{s}).$$
\end{proof}

Let $A=\K Q/I$ and let $[f] \in \HH^2(A)$.
Our next goal is to give a presentation by quiver and relations of $A_f$.

Recall that $\Gamma= \{\rho_1, \cdots, \rho_m\}$ is a set of relations generating $I$. For each $\rho \in \Gamma$ we fix an element $w_\rho$ in $\K \P$ such that $\overline w_\rho = \hat f (\rho)$. 
Let $Q_{f}$ be the quiver given by
\begin{align*}
	(Q_{f})_0 & = Q_0, \\ 
	(Q_{f})_1 & =  \{ \hat \alpha  \ \vert \ \alpha\in Q_1\} \cup    \{ \hat e_i : i \to i \ \vert \  e_i \not \in \Im f\}.  
\end{align*}
For any path  $w = \alpha_1 \cdots \alpha_s$  in $Q$, we set $\hat w : = \hat \alpha_1 \dots \hat \alpha_s$  in $Q_f$, and for any vertex $i \in Q_0$ we denote 
\[
\epsilon_i=  \begin{cases}
\hat e_i, & \mbox{if $e_i \not \in \Im f$}, \\
\sum_j  \mu_j \hat \rho_j,  \quad & \mbox{if $e_i=\hat f( \sum_j \mu_j \rho_j)$, $\rho_j \in \Gamma$}.
\end{cases} \]
Let  $I_{f} $ be the ideal of $\K Q_f$ generated by
\begin{align}\label{eq: rel 1}
	\epsilon_i^2&   \hspace{0.6cm} \mbox{ for } i  \in Q_0;  \\ \label{eq: rel 2}
	\epsilon_i  \hat \alpha  -  \hat \alpha  \epsilon_j&     \hspace{0.6cm} \mbox{ for } \alpha: i \to j   \in Q_1;\\ \label{eq: rel 3}
	\hat \rho  - \hat w_\rho \epsilon_{t(\rho)}&    \hspace{0.6cm} \mbox{ for } \rho  \in \Gamma.
\end{align}

\begin{theorem} 
	Let $A=\K Q/I$ and let $[f] \in \HH^2(A)$.  Then $A_f = \K Q_{f}/I_{f}$.
\end{theorem}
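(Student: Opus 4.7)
My plan is to exhibit an explicit algebra map $\bar\phi:\K Q_f/I_f\to A_f$, prove surjectivity, and conclude bijectivity by a dimension count. By the universal property of $\K Q_f$ I define $\phi:\K Q_f\to A_f$ by $\phi(\hat\alpha)=(\oa,0)$ for $\alpha\in Q_1$, $\phi(\hat e_i)=(0,e_i)$ for those $i$ with $e_i\notin\Im f$, and $\phi(e_i)=(e_i,0)$ on vertices. A direct computation, using Lemma~\ref{Im f} to compute $\phi(\hat\rho_j)=(0,\hat f(\rho_j))$ whenever $\epsilon_i=\sum_j\mu_j\hat\rho_j$, shows that $\phi(\epsilon_i)=(0,e_i)$ uniformly. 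The relations \eqref{eq: rel 1}--\eqref{eq: rel 3} then lie in $\ker\phi$ by short calculations in $A_f$: $\phi(\epsilon_i^2)=0$; $\phi(\epsilon_i\hat\alpha)=(0,\oa)=\phi(\hat\alpha\epsilon_j)$ for $\alpha:i\to j$; and $\phi(\hat\rho)=(0,\hat f(\rho))=(0,\overline w_\rho)=\phi(\hat w_\rho\epsilon_{t(\rho)})$ by Lemma~\ref{Im f} together with the choice of $w_\rho$. Hence $\phi$ descends to $\bar\phi$.

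\textbf{Surjectivity.} For each $\gamma\in\P$ one computes $\bar\phi(\hat\gamma)=(\og,\hat f(\gamma))$ and $\bar\phi(\hat\gamma\,\epsilon_{t(\gamma)})=(0,\og)$. The transition from the standard basis $\{(\og,0),(0,\og):\gamma\in\P\}$ of $A_f$ to the family $\{(\og,\hat f(\gamma)),(0,\og):\gamma\in\P\}$ is unipotent triangular, so the latter is also a basis of $A_f$ and $\bar\phi$ is surjective.

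\textbf{Dimension bound.} Let $N\subset\K Q_f/I_f$ be the two-sided ideal generated by $\{\epsilon_i:i\in Q_0\}$. Iterating \eqref{eq: rel 2} yields $\epsilon_i\hat w=\hat w\,\epsilon_{t(w)}$ for every path $w$ in $Q$; combined with \eqref{eq: rel 1} this forces $N^2=0$, and in particular $N$ is spanned by elements of the form $\hat w\,\epsilon_{t(w)}$ with $w$ a path in $Q$. Given such $w$, write $\ow=\sum_i\lambda_i\og_i$ with $\gamma_i\in\P$, so that $w-\sum_i\lambda_i\gamma_i=\sum_j\mu_j u_j\rho_j v_j\in I$; then by \eqref{eq: rel 3} each $\hat u_j\hat\rho_j\hat v_j$ is congruent modulo $I_f$ to $\hat u_j\hat w_{\rho_j}\hat v_j\,\epsilon_{t(v_j)}\in N$, so $\hat w\equiv\sum_i\lambda_i\hat\gamma_i\pmod{N}$. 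Multiplying by $\epsilon_{t(w)}$ and using $N^2=0$ gives $\hat w\,\epsilon_{t(w)}\equiv\sum_i\lambda_i\hat\gamma_i\,\epsilon_{t(\gamma_i)}$, hence $\dim N\leq|\P|$. On the other hand, passing to $(\K Q_f/I_f)/N$ trivialises \eqref{eq: rel 1} and \eqref{eq: rel 2} and turns \eqref{eq: rel 3} into $\hat\rho=0$, so $(\K Q_f/I_f)/N$ is a quotient of $\K Q/I=A$ and has dimension at most $|\P|$. Therefore $\dim\K Q_f/I_f\leq 2|\P|=\dim A_f$, and surjectivity of $\bar\phi$ promotes it to an isomorphism.

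\textbf{Expected difficulty.} The delicate step is the upper bound $\dim N\leq|\P|$. Because $\epsilon_i$ can itself be a non-trivial combination of $\hat\rho$'s when $e_i\in\Im f$, one cannot treat $N$ as the span of ``length-one extensions by an arrow $\hat e_i$''; instead one must use \eqref{eq: rel 3} as a rewriting rule that converts every relation $\rho\in I$ into an $\epsilon$-divisible element of $\K Q_f/I_f$, and then exploit $N^2=0$ after multiplication by $\epsilon_{t(w)}$ to collapse the reduction onto the canonical family $\{\hat\gamma\,\epsilon_{t(\gamma)}\}_{\gamma\in\P}$.
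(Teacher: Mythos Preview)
Your argument is correct and follows essentially the same route as the paper: the same map $\phi=\pi$, the same check that $I_f\subset\ker\pi$ via Lemma~\ref{Im f}, and the same observation that $\{\hat\gamma,\ \hat\gamma\,\epsilon_{t(\gamma)}:\gamma\in\P\}$ is sent to a basis of $A_f$. Your dimension bound via the two-step filtration $0\subset N\subset \K Q_f/I_f$ is a tidy repackaging of the paper's direct spanning argument (the paper simply rewrites an arbitrary path of $Q_f$ into $\hat\P\cup\hat\P\epsilon$ using \eqref{eq: rel 1}--\eqref{eq: rel 3}); the underlying rewriting is the same. One point the paper includes and you omit: before defining $\pi$, the paper computes $\rad A_f=\rad A\oplus A$ and $\rad^2 A_f=\rad^2 A\oplus(\rad A+\Im f)$, which certifies that $Q_f$ really is the ordinary quiver of $A_f$ (and hence that $(Q_f,I_f)$ is a genuine presentation). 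For the bare isomorphism $A_f\cong\K Q_f/I_f$ asserted in the theorem your proof suffices, but if the intended content is ``presentation by quiver and relations'' in the technical sense, that radical computation is part of the statement.
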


\begin{proof}
	Since $A_f = A \oplus A$ with $(a,b) (a',b') = (aa', f(a \otimes a') + ab'+ba')$, for $a,a',b,b'\in A$, we have that
	$$ (\rad A \oplus A )^m \subset \rad^m A \oplus (\rad^{m-1} A + \sum_{i+j=m-1} \rad^i A . f (\rad^j A  \otimes \rad A))$$
	which is clearly nilpotent, hence $ \rad A \oplus A \subset \rad A_f $.  Moreover, $A_f / (\rad A \oplus A) \simeq A/ \rad A$ is semisimple, hence $\rad A_f  = \rad A \oplus A$ and $ (Q_{A_f})_0  = (Q_A)_0$.
	Now we will prove that $$\rad^2 A_f = \rad^2 A \oplus (\rad A + \Im f).$$  The first inclusion is clear.
	On the other hand, by Remark \ref{rem: imagen f} we may assume $\Im f \subset \hat f (I)$. Thus,  for $r_1, r_2 \in \rad A$ we have
	\begin{align*}
		(r_1 r_2, 0) & =  (r_1, 0)(r_2, 0) - (0, f(r_1 \otimes r_2)) \\
		& = (r_1, 0)(r_2, 0) - (0, \hat f (\rho) ),
	\end{align*}	
	for some $\rho \in I$, and from Lemma \ref{Im f} we conclude that $(r_1 r_2, 0) \in \rad^2 A_f$. Similarly, if $\hat f(\rho) \in \Im f$ for some $\rho \in I$ then
	$(0, \hat f (\rho)) \in \rad^2 A_f$. Finally, if $r \in \rad A$
	$$(0,r) =(r,0) (0, e_{t(r)}) \in \rad^2 A_f.$$
	Then  $\rad A_f / \rad^2 A_f= \rad A / \rad^2 A \oplus A / (\rad A + \Im f)$ and the description of $(Q_{f})_1$ is clear. 
	
	Let $\pi: \K Q_{f} \to A_f$ be the algebra epimorphism given by $\pi(i) = (e_i,0)$, $\pi (\hat \alpha) = (\oa,0)$ and $\pi(\hat e_i) = (0, e_i)$.  It is clear that $I_{f}  \subset \Ker \pi$ since, by Lemma \ref{Im f},
	\begin{align*}
		\pi (\epsilon_i^2) &=\begin{cases}
			\pi ( \hat e_i)^2=  (0,e_i)^2 =0,  & \mbox{if $e_i \not \in \Im f$}, \\
			\pi (\sum_j  \mu_j \hat \rho_j)^2 = (0,e_i)^2 = 0, \quad & \mbox{if $e_i=\hat f( \sum_j \mu_j \rho_j)$, $\rho_j \in \Gamma$},
		\end{cases} \\ 
		\pi (\epsilon_i  \hat \alpha-  \hat \alpha \epsilon_j) & = (0, e_i) (\bar{\alpha}, 0) -  (\bar{\alpha}, 0) (0, e_j) =0.
	\end{align*}
	Also, if $\rho  \in \Gamma$,  we get 
	\begin{align*}
		\pi  (\hat \rho) & =  (0 , \hat f(\rho))  = (\hat f(\rho),0)(0, e_{t(\rho)}) = \pi (\hat w_\rho \epsilon_{t(\rho)}).	
	\end{align*}
	It remains to prove that $\Ker \pi \subset I_f$. Consider the set $\mathcal B$ of equivalence classes of elements in 
	\[ \hat \P \cup \hat \P \epsilon= \{ \hat \gamma \  \vert \ \gamma \in \P\} \cup  \{ \hat \gamma \epsilon_{t(\gamma)}\ \vert \ \gamma \in \P\}. \]
	We claim that $\mathcal B$ is a basis of  $\K Q_f/I_f$.  Indeed,  $\K Q_f/I_f$ is generated by equivalence classes of paths in $Q_f$.  But, by \eqref{eq: rel 1} and \eqref{eq: rel 2}, we can choose  representatives  of the form $\hat \theta$ or $\hat \theta \epsilon_i$ with $\theta$  a path in $Q$.  Now, our assumption on the set $\P$ implies that any path $\theta$ can be written as
	$$\theta = \sum_j \lambda_j \gamma_j + \sum_k \mu_k \rho_k$$
	with $\gamma_j \in \P$, $\rho_k \in \Gamma$.
	Moreover,  $\hat \rho_k - \hat w_{\rho_k} \epsilon_{t(\rho_k)} \in I_f$ by \eqref{eq: rel 3}. Hence, the representatives in $\K Q_f$ shall be of the form
	\begin{align}
		\hat \theta &=
		\sum_j \lambda_j \hat \gamma_j +\sum_k \mu_k \hat w_{\rho_k} \epsilon_{t(\rho_k)}, \quad \mbox{ or }\\
		\hat \theta \epsilon &=
		\sum_j \lambda_j \hat \gamma_j  \epsilon_{t(\gamma_j)},
	\end{align}
	with all $\hat \gamma_j \in \hat \P$  and where all $\hat w_{\rho_k} \epsilon_{t(\rho_k)}$ can be written, modulo $I_f$, as a linear combination of paths in $\hat \P \epsilon$. Thus $\mathcal B$ generates  $\K Q_f/I_f$.
	
	Finally, for any  $\gamma= \alpha_1 \cdots \alpha_s \in \P$ we have
	\begin{align*}
		\pi(\hat \gamma) &=\pi(\hat \alpha_1) \cdots \pi(\hat \alpha_s) = (\bar{\alpha_1},0) \cdots (\bar{\alpha_s},0) = (\bar{\gamma} , \hat f(\gamma)),\\ 
		\pi(\hat \gamma \epsilon_i) &= \pi(\hat \gamma) \pi( \epsilon_i)= (\bar{ \gamma }, \hat f(\gamma)) (0, e_i) = (0 , \bar{\gamma}), \qquad &\mbox{ if $e_i \not \in   \Im f$},\\  
		\pi(\hat \gamma \epsilon_k) &= \pi(\hat \gamma) \pi(  \sum_j \mu_j \hat \rho_j)= (\bar{\gamma}, \hat f(\gamma)) (0, e_k)  =  (0 ,\bar{ \gamma}),  \qquad & \mbox{if $e_k=\hat f( \sum_j \mu_j \rho_j)$}. 
	\end{align*}
	Let $\pi^* : \K Q_f/I_f \to A_f$. Then 
	$\pi^* (\mathcal B)$ is a basis of  $A_f$, and therefore the set  $\mathcal B$ is linearly independent and we are done. 
\end{proof}

\subsection{Examples}
 In the examples below we use results from \cite{RR1} and  \cite{BGMS} for describing non zero elements in the second cohomology group of certain monomial and  non monomial algebras respectively.

\begin{example} Let $A= \K [t]/(t^2)$ be the algebra of dual numbers. In this case, $A \simeq \K Q/I$, where $Q$   is the quiver: 
	\[    \xymatrix{  
		1 \ar@(ur,dr)^{\alpha}   } \] 
	and $I= <\alpha^2  >$.   Let  $[f] \in \HH^2(A)$ be given by
	\begin{align*} f(a \otimes b) = \left\{
		\begin{array}{ll}
			e_1,   \qquad & \mbox{if $a= b =  \alpha$},\\
			0, & \mbox{otherwise}.
		\end{array} \right. \end{align*} 
		Thus $A_{f} \simeq \K Q_{f}/ I_{f}$ where $Q_{f}=Q$ is the same as the quiver of $A$ and $I_{f} = <\alpha^4 >$.
\end{example}

\begin{example}\label{ejemplo1}
	Let  $A = \K Q/I $ where $Q$ is the quiver:  \[    \xymatrix{
		1 \ar@<6pt>[rr]^{ \alpha_1} & &     2    \ar[ll]^{\alpha_2}  } \] and  $I = <\alpha_1\alpha_2  >$.
The dimension of the second Hochschild cohomology group of $A$ is one  and a generator $[f]$ is given by
	\begin{align*}f(a \otimes b) = \left\{
		\begin{array}{ll}
			a_1a_2,      \qquad & \mbox{if  $a b= a_1\alpha_1 \alpha_2a_2$},\\
			0, & \mbox{otherwise},
		\end{array} \right. \end{align*} where $a_1$ and  $a_2$  are paths in $A$.
	Consider  the infinitesimal deformation  $A_{f}$. From  our previous theorem we have   that $A_{ f} \simeq \K Q_{ f}/I_{ f}$ where $Q_{ f}$ is the quiver
	\[    \xymatrix{
		1 \ar@<6pt>[rr]^{ \hat \alpha_1} & &     2 \ar@(ur,dr)^{\hat e_2}    \ar[ll]^{ \hat \alpha_2} } \]
	and $I_{ f} = <\hat \alpha_1\hat \alpha_2 \hat \alpha_1 \hat \alpha_2, \ \hat e_2^2, \ \hat \alpha_1 \hat \alpha_2 \hat \alpha_1 - \hat \alpha_1\hat e_2, \  \hat \alpha_2\hat \alpha_1\hat \alpha_2 -  \hat e_2\hat \alpha_2   >$.
	
\end{example}

\begin{example}  Let  $A = \K Q/I $ where $Q$ is the quiver   \[    \xymatrix{
		&  2 \ar[rd]^{ \alpha_2 } &   \\  1 \ar[rr]_{\alpha_3}  \ar[ru]^{\alpha_1 } & & 3 } \]   and $I = < \alpha_1\alpha_2>$.
	Then, $\rm dim_\K \HH^2(A)= 1$ and   $[f]$ generates $\HH^2(A)$ where 
		$$f(a \otimes b) = \left\{
	\begin{array}{ll}
	\alpha_3,    \qquad & \mbox{if $a = \alpha_1, \  b= \alpha_2$}, \\
	0, & \mbox{otherwise}.
	\end{array} \right. $$ 
	In this case,  the presentation $(Q_{f}, I_{f})$ of the 
	infinitesimal deformation  $A_{f}$  is given by
	\[    \xymatrix{
		&   2 \ar@(ul,ur)^{\hat e_2} \ar[rd]^{ \hat \alpha_2} &   \\  1 \ar@(ul,dl)_{\hat e_1}  \ar[rr]_{\hat \alpha_3}  \ar[ru]^{\hat \alpha_1} & & 3 \ar@(ur,dr)^{\hat e_3} } \]
	and $I_{f} = <\hat e_1^2, \ \hat e_2^2, \  \hat e_3^2, \ \hat e_1 \hat \alpha_1 - \hat \alpha_1 \hat e_2, \  \hat e_2 \hat \alpha_2 - \hat \alpha_2\hat e_3,\  \hat e_1 \hat \alpha_3 - \hat \alpha_3 \hat e_3,  \hat \alpha_1 \hat \alpha_2 -   \hat \alpha_3\hat e_3 \    >$.
	
\end{example}

The following example deals with a non monomial algebra. For the computation of the second cohomology group we refer to \cite{BGMS}.
\begin{example} Let  $A = \K Q/I$, where $Q$  is the quiver 
	\[    \xymatrix{  
		\ar@(ul,dl)^{\alpha}   1 \ar@(ur,dr)^{\beta}   } \] 
	and $I= <\alpha^2, \beta^2, \alpha\beta+q\beta\alpha  >$, with $q\in\K$.  Let $f:A\otimes_\K A\to A$ be given by 
	\begin{align*} 
		f(\alpha \otimes \beta+ q\beta\otimes \alpha) = \beta\alpha
	\end{align*} 
	and zero otherwise. By \emph{loc. cit.}, $[f]\in\HH^2(A)$.
	Thus,  the presentation $(Q_{f}, I_{f})$ of the 
	infinitesimal deformation  $A_{f}$  is given by
	\[  \xymatrix{  
		\ar@(ul,dl)^{\hat\alpha}   1 \ar@(ur,dr)^{\hat\beta} \ar@(dl,dr)_{\hat{e}_1}  } \]
	and $I_{f} = <\hat e_1^2, \,  \hat\alpha^2, \, \hat\beta^2,\, \hat e_1\hat\alpha - \hat\alpha \hat e_1, \, \hat e_1\hat\beta - \hat\beta \hat e_1  , \, \hat\alpha\hat\beta+q\hat\beta\hat\alpha - \hat\beta\hat\alpha\hat e_1>$.
\end{example}

\end{document}